\newtheorem{theorem}{Theorem}[section]
\newtheorem{lemma}[theorem]{Lemma}
\newtheorem{proposition}[theorem]{Proposition}
\theoremstyle{definition}
\newtheorem{definition}[theorem]{Definition}
\theoremstyle{remark}
\newtheorem{remark}[theorem]{Remark}
\newtheorem{example}[theorem]{Example}
\newtheorem{corollary}[theorem]{Corollary}
\newcommand{\norm}[1]{\lVert#1\rVert}
\DeclarePairedDelimiter{\ceil}{\lceil}{\rceil}
\DeclareMathOperator{\argmin}{argmin}
\DeclareMathOperator{\argminimax}{argminimax}
\DeclareMathOperator{\minimax}{minimax}
\DeclareMathOperator{\zer}{zer}
\DeclareMathOperator{\dom}{dom}
\newcommand{\CAT}{{\rm{CAT}(0)}}
\newcommand{\ql}[2]{\left\langle\overrightarrow{#1},\overrightarrow{#2}\right\rangle}
\def\EE{\mathbb{E}}
\def\PP{\mathbb{P}}
\begin{document}

\title[Convergence of a stochastic PPA in metric spaces]{Mean-square and sublinear convergence of a stochastic proximal point algorithm in metric spaces of nonpositive curvature}

\author[N. Pischke]{Nicholas Pischke}
\date{\today}
\maketitle
\vspace*{-5mm}
\begin{center}
{\scriptsize 
Department of Computer Science, University of Bath,\\
Claverton Down, Bath, BA2 7AY, United Kingdom.\\
E-mail: nnp39@bath.ac.uk}
\end{center}

\maketitle
\begin{abstract}
We define a stochastic variant of the proximal point algorithm in the general setting of nonlinear Hadamard spaces for approximating zeros of the mean of a stochastically perturbed monotone vector field. Generalizing previous work by P.\ Bianchi, we prove the convergence of this method under a suitable strong monotonicity assumption in (separable) Hilbert-Hadamard spaces, that is assuming that all tangent spaces isometrically embed into Hilbert spaces (covering, but not being limited to, the setting of Hadamard manifolds). Moreover, our convergence proof is fully effective and allows for the construction of explicit rates of convergence for the iteration towards the (unique) solution both in mean and almost surely. These rates are moreover highly uniform, being independent of most data surrounding the iteration, space or distribution. In that generality, these rates are novel already in the context of Hilbert spaces. Sublinear nonasymptotic guarantees under additional second-moment conditions on the Yosida approximates and special cases of stochastic convex minimization are discussed.
\end{abstract}

\noindent
{\bf Keywords:} Proximal point algorithm; stochastic approximation; rates of convergence; Hadamard spaces; proof mining\\ 
{\bf MSC2020 Classification:} 47J25, 90C15, 62L20, 03F10

\section{Introduction}

\subsection{Background and motivation}

One of the fundamental problems in stochastic approximation is solving
\[
\min_{x\in X}\int f(s,x)\,d\mu(s),
\]
for a function $f:E\times X\to (-\infty,+\infty]$ on a probability space $(E,\mathcal{E},\mu)$ and some other target space $X$. Indeed, this problem is widely studied for various classes of spaces $X$ and functions $f$, with particular focus being placed on approximation methods and their complexity, and we refer to \cite{Bertsekas2011,Bertsekas2012,NemirovskiJuditskyLanShapiro2009} and the references therein for various discussions along those lines. If $X$ is a Hilbert space and $f$ is a normal convex integrand (see \cite{RockafellarWets1998}), some of the most prominent methods employed in that context are variants of the well-known stochastic proximal point method, that is the iteration
\begin{equation}
x_{n+1}:=\mathrm{prox}_{\lambda_n}^f(\xi_{n+1},x_n)\tag{$+$}\label{convPPA}
\end{equation}
of random variables over an auxiliary probability space $(\Omega,\mathcal{F},\PP)$, assuming a starting point $x_0\in X$, a sequence of parameters $(\lambda_n)\subseteq (0,\infty)$ with certain growth conditions, a sequence $(\xi_{n+1})$ of random variables $\xi_{n+1}:\Omega\to E$ which are independent and identically distributed (i.i.d.)\ with (common) distribution $\mu$, and writing $\mathrm{prox}_{\lambda}^f(s,x):=\argmin_{y\in X}\left\{ f(s,y)+\frac{1}{2\lambda}\norm{x-y}^2\right\}$ for the proximal map of $f$. This iteration and variants thereof, and in particular their complexities, are widely studied under various assumptions on $f$ (we refer to \cite{Bacak2018,Bertsekas2011,Bertsekas2012,NemirovskiJuditskyLanShapiro2009,RyuBoyd}, among many others), prominently when $f$ is strongly convex where then also fast rates of convergence can be achieved under additional moment conditions.

In a non-probabilistic setting, as is well-known (see e.g.\ \cite{BauschkeCombettes2017}), proximal maps of convex functions are just special instantiations of the general notion of a resolvent of a monotone operator. On this general level of monotone operators, Bianchi \cite{Bianchi2016} (see also \cite{Bianchi2015}) studied a corresponding variant of the proximal point algorithm phrased using resolvents for general monotone operators which are now stochastically perturbed, similar as to $f$ above.

Concretely, let $(E,\mathcal{E},\mu)$ and $(\Omega,\mathcal{F},\PP)$ be probability spaces as before, let $X$ be a separable Hilbert space and $A:E\times X\to 2^X$ be a set-valued map. Under a suitable measurability assumption on $A$ and assuming the maximal monotonicity of $A(s,\cdot)$ (the precise assumptions will be discussed later), Bianchi \cite{Bianchi2016} studied the iteration
\begin{equation}
x_{n+1}:=J_{\lambda_n}(\xi_{n+1},x_n)\tag{$*$}\label{prePPA}
\end{equation}
for an i.i.d.\ sequence $(\xi_{n+1})$ of random variables $\Omega\to E$ with distribution $\mu$ and a suitable sequence of parameters $(\lambda_n)$ as before, where now $J_\lambda(s,x):=(\mathrm{Id}+\lambda A(s,\cdot))^{-1}(x)$.

This process indeed generalizes the method \eqref{convPPA} discussed above by setting $A=\partial f$, with $\partial f$ being the (stochastic) subdifferential of $f$. Moreover, as highlighted in \cite{Bianchi2016}, this method bears resemblance in form to the seminal Robbins-Monro method \cite{RobbinsMonro1951} for finding roots of integral functions $\int V(s,x)\,d\mu(s)$, that is $x_{n+1}:=x_n-\lambda_n V(\xi_{n+1},x_n)$, with similar constants as above, as \eqref{prePPA} can also be equivalently written as $x_{n+1}:=x_n-\lambda_n A_{\lambda_n}(\xi_{n+1},x_n)$ where $A_\lambda(s,x):= (x-J_\lambda(s,x))/\lambda$ is the so-called Yosida approximation of the operator $A$.

While the iteration \eqref{convPPA} approximates a minimizer of the mean of the function in question, the iteration generated by \eqref{prePPA} approximates a zero of the mean operator 
\[
\underline{A}(x):=\int A(s,x)\,d\mu(s),
\]
where the integral refers to the Aumann integral \cite{Aumann1965}. Indeed, as shown in \cite{Bianchi2016}, the weighted averages of the sequence $(x_n)$, that is $\overline{x}_n:=\sum_{k=0}^n\lambda_kx_k/\sum_{k=0}^n\lambda_k$, converge weakly to a zero of $\underline{A}$. Approaching this convergence result faces considerable difficulties, resulting among others in an additional uniform integrability assumption, and the proof given by Bianchi in \cite{Bianchi2016} is very sophisticated.

As illustrated in \cite{Bianchi2016}, these additional uniform integrability assumptions and some of the difficulties of the proof can be circumvented under the assumption of strong monotonicity of $A$, and the convergence can then be improved to the strong convergence of $(x_n)$ towards the (in that context unique) zero of $\underline{A}$. However, even in that context the proof given in \cite{Bianchi2016} is nontrivial, relying on various well-known results from stochastic approximation, like the Robbins-Siegmund theorem on supermartingale convergence, that are not immediately recognized to be effective. As put forward by Bianchi in \cite{Bianchi2016}, while the work \cite{Bianchi2016} is set in a highly general context, ``the price to pay with our approach is the absence of convergence rate certificates''. Indeed, while rates have been given for various special cases, they have (essentially) always focused on the case $A=\partial f$ for some suitable convex function $f$ and the general case remains, to our knowledge, quantitatively untreated already for strongly monotone operators $A$ over Hilbert spaces.

\subsection{The contributions of the present paper and related work}

In the present paper, we augment the strong convergence result given in \cite{Bianchi2016} under a strong monotonicity assumption with explicit rates of convergence both in expectation and almost surely. However, we move considerably beyond simply quantitatively outfitting the results of Bianchi by lifting the algorithm to the general nonlinear setting of Hadamard spaces, that is complete geodesic metric spaces of nonpositive curvature. 

Metric spaces of nonpositive curvature were originally introduced by Alexandrov and are commonly called $\CAT$ spaces, after the work of Gromov. Examples range from Hilbert spaces, $\mathbb{R}$-trees and Hadamard manifolds (i.e.\ complete simply connected Riemannian manifolds of nonpositive sectional curvature), to intricate examples like the Billera-Holmes-Vogtmann tree space prominently used in phylogenetics \cite{BilleraHolmesVogtmann2001}. As illustrated by this plethora of spaces, extending tools and results from convex analysis to such metric contexts is particularly well motivated through applied considerations, not the least of which being the extensive developments of machine learning (where optimization over manifolds and other nonlinear spaces plays a key role, as discussed e.g.\ in \cite{ZhangSra2016}). We refer to the seminal monograph \cite{BridsonHaefliger1999} for a comprehensive overview of $\CAT$ and Hadamard spaces and further refer to \cite{Bacak2014a} for a shorter treatment focused on aspects of convex analysis and optimization and to \cite{AlexanderKapovitchPetrunin2023} for a recent treatment of geodesic metric spaces.

Even defining the results of Bianchi in this general context is rather subtle, requiring a synthesis of a range of different notions and results. Concretely, at first we rely on the theory of monotone vector fields in these general geodesic contexts, as introduced by Chaipunya, Kohsaka and Kumam \cite{ChaipunyaKohsakaKumam2021},\footnote{As mentioned in \cite{ChaipunyaKohsakaKumam2021}, this notion seems to be distinct from the notion of a monotone operator on a $\CAT$ space as introduced by Khatibzadeh and Ranjbar \cite{KhatibzadehRanjbar2017}, relying on a previous notion of dual space for a $\CAT$ space by Kakavandi and Amini \cite{KakavandiAmini2010}.} simultaneously generalizing monotone operators on Hilbert spaces and monotone vector fields on Hadamard manifolds (see Section \ref{sec:opCAT}). These in turn further require various considerations on the geometry of geodesic metric spaces, including in particular the notion of tangent spaces introduced in this general context by Nikolaev \cite{Nikolaev1995}, generalizing the respective central notion from Riemannian manifolds (see Section \ref{sec:tangCAT}). Beyond that, we naturally require a theory of integration in the context of Hadamard spaces which was largely developed in the seminal work of Sturm \cite{Sturm2002,Sturm2003} (see Section \ref{sec:intCAT}). In particular, we require an extension of Sturm's integral to set-valued mappings in this metric context, that is an Aumann-Sturm type integral.

All of these considerations come together to define stochastically perturbed monotone vector fields on (separable) Hadamard spaces (see Section \ref{sec:randOp}) and with that a metric analogue of the stochastic proximal point algorithm of Bianchi (see Section \ref{sec:randPPA}). For this method, we prove a strong convergence result (see Theorem \ref{thm:main}) under a strong monotonicity assumption over (separable) Hilbert-Hadamard spaces, that is assuming that all tangent spaces have flat curvature (or, equivalently, that they isometrically embed into Hilbert spaces), a subclass of Hadamard spaces introduced by Gong, Wu and Yu \cite{GongWuYu2021} in the context of work on the Novikov conjecture and discussed in further detail later on. This flatness of the tangent spaces is in particular used to prove that the integral commutes with the pseudo-Riemannian metric for independent random variables, a property on which the proof crucially relies (see Lemma \ref{indepMetric}).

Our result hence immediately covers both Bianchi's original setting of (separable) Hilbert spaces as well as Hadamard manifolds, in which case our results already seem to be in particular qualitatively novel as, to our knowledge, such a stochastic variant of the general proximal point algorithm in the style of Bianchi was not considered in any kind of nonlinear context before. Based on the generality of Hilbert-Hadamard spaces, the result however also reaches beyond the manifold setting (see Example \ref{exHilbertHadamard} later on).

In that way, our results also extend the previous seminal work of Li, L\'opez and Mart\'in-M\'arquez \cite{LiLopezMartinMarquez2009} on the proximal point method for monotone vector fields in Hadamard manifolds for the first time to the stochastic context, at least in the special case of strong monotonicity. In the special case of the subdifferential of a strongly convex function, which will be discussed throughout, our method in particular reduces to a stochastic proximal point algorithm previously studied by Ba\v{c}\'ak \cite{Bacak2018} (extending \cite{Bacak2014b} as well as \cite{Bacak2013}) as well as Ohta and P\'alfia \cite{OhtaPalfia2015}.\footnote{Indeed, our approach is quite different to that of \cite{Bacak2018,OhtaPalfia2015}. In the context of a strongly convex function, our results dispense of the Lipschitz-like conditions from \cite{Bacak2018,OhtaPalfia2015} at the expense of working over Hilbert-Hadamard spaces. Comparing, and perhaps unifying, these works would prove for interesting future work.}

Also in this generalized setting, our convergence proof is in fact fully effective and allows for the construction of explicit rates of convergence for the iteration towards the (unique) solution both in expectation and almost surely, which are highly uniform, being independent of most data surrounding the iteration, space or distribution. Even in the context of strong monotonicity assumptions, rates of convergence for the stochastic proximal point method are largely restricted to the setting of a convex function (where the assumption translates to a strong convexity assumption), such as in the well-known works \cite{AsiDuchi2019,PatrascuNecoara2018} as well as \cite{EisenmannStillfjordWilliamson2022}. The only work known to us that treats general random monotone operators quantitatively is the recent preprint \cite{SadievCondatRichtarik2024} where fast rates of convergence are derived under a strong monotonicity assumption in Hilbert spaces. This work however focuses on the method $x_{n+1}:=J_{\lambda}(\xi_{n+1},x_n)$, i.e.\ where the $\lambda_n$ are kept constant, which in particular results in the method only converging to a neighborhood of the solution. This is hence distinct from the method studied by Bianchi in \cite{Bianchi2016}, which in particular guarantees convergence to the solution, making crucial use of the fact that the $\lambda_n$ vanish (by virtue of assuming $\sum_{n\in\mathbb{N}}\lambda_n^2<\infty$). In that way, in the context of Bianchi's method from \cite{Bianchi2016}, the rates presented in this paper seem to us to be novel already in the context of Hilbert spaces in their generality.

In the end, we briefly discuss applications to the case of the minimization of the expectation of a strongly convex function (see Corollary \ref{cor:convexFct}). Also, we discuss additional uniform boundedness conditions on the second moments of the Yosida approximates, akin to some of the assumptions considered in \cite{AsiDuchi2019,EisenmannStillfjordWilliamson2022} in the special case of a convex function and restricted to a linear setting, which allow us to derive fast nonasymptotic guarantees (see Theorem \ref{thm:mainFast}).\footnote{Over Hilbert-Hadamard spaces, these quantitative results in particular apply to the setting of \cite{Bacak2018,OhtaPalfia2015}, both of which do not provide explicit rates, neither in mean nor almost surely. Instead, the latter only briefly discusses the possibility of constructing fast estimates for the special cases of minimizing a finite sum of strongly convex functions (see \cite[Proposition 5.7]{OhtaPalfia2015} and the discussion thereafter) or of Sturm's strong law of large numbers (see \cite[Remark 6.8]{OhtaPalfia2015}). It should however be noted that \cite{OhtaPalfia2015} is set in the more general setting of spaces with curvature bounded above.}

The methods we employ here to derive the rates of convergence follow a general approach introduced recently by Neri, Powell and the author \cite{NeriPischkePowell2025} (see also \cite{PischkePowell2026}) towards constructing rates of convergence for very general classes of stochastic approximation methods,\footnote{The results from \cite{NeriPischkePowell2025}, and likewise the present results, have been obtained using the logic-based methodology of \emph{proof mining} \cite{Kohlenbach2008,Kohlenbach2019}. More precisely, they are part of a recent advance to apply these logical methods in probability theory and stochastic optimization for the first time \cite{NeriPischke2024,NeriPischkePowell2025,NeriPowell2025,NeriPowell2024,PischkePowell2025,PischkePowell2026}. As common in proof mining however, this paper avoids any reference to mathematical logic.} including ones that pertain to metric generalizations of stochastic quasi-Fej\'er monotonicity (as studied in Hilbert spaces in the seminal works of Combettes and Pesquet \cite{CombettesPesquet2015,CombettesPesquet2019}) of which the present algorithm is, crucially, a particular instance. Indeed, our paper in that way also serves as a case study to illustrate how the abstract approach from \cite{NeriPischkePowell2025} can be used in a very concrete situation to give a perspicuous quantitative analysis of a rather involved algorithm and also how the general metric setting of \cite{NeriPischkePowell2025} can be practically of use. However, the presentation of the paper is self-contained and does not require familiarity with \cite{NeriPischkePowell2025}. We hope that the concrete applications presented in the present paper help to develop future applications of \cite{NeriPischkePowell2025}, such as potentially to the recent works of Karimi, Hsieh, Mertikopoulos and Krause \cite{HsiehKarimiKrauseMertikopoulos2023,KarimiHsiehMertikopoulosKrause2022}, where variants of the Robbins-Monro method over Riemannian manifolds were studied. In particular, the stochastic considerations on metric spaces of the present paper might be of help in lifting these results to a broader metric context.

Also for the stochastic proximal point method studied here, various questions remain which we hope can be answered by future research instigated by the present work, such as whether the weak convergence results from \cite{Bianchi2016} or the convergence results for the distinct method $x_{n+1}:=J_{\lambda}(\xi_{n+1},x_n)$ from the previously mentioned preprint \cite{SadievCondatRichtarik2024} lift to the metric setting.

\section{Preliminaries}\label{sec:prelim}

In this section, we give the necessary preliminaries for the various objects involved in the present paper, such as geodesic metric spaces and their tangent bundles, monotone vector fields on these spaces, and in particular the theory of integration on geodesic spaces of nonpositive curvature. The range of different notions involved, the care needed to bring them together later, and the fact that such a combination has not been considered before, results in rather complex preliminaries which we however have tried to keep as minimal as possible.

\subsection{Geodesics, $\CAT$ spaces and tangent spaces}\label{sec:tangCAT}

We begin with the background on geodesic metric spaces.
Let $(X,d)$ be a metric space. A geodesic is an isometry $\gamma:[0,l]\to X$. We call the image $\gamma([0,l])$ a geodesic segment and say that it joins $x=\gamma(0)$ and $y=\gamma(l)$, as well as that $\gamma$ issues from $x$. Note that necessarily $l=d(x,y)$. $X$ is called (uniquely) geodesic if every two points are joined by a (unique) geodesic. If such geodesics are unique, we denote the (unique) geodesic connecting two points $x,y\in X$ by $\gamma_{x,y}$.

For the purpose of this paper, a geodesic metric space $(X,d)$ is now called a $\CAT$ space (also called a space of nonpositive curvature in the sense of Alexandrov) if it satisfies 
\[
d^2(\gamma(tl),x)\leq (1-t)d^2(\gamma(0),x)+td^2(\gamma(l),x)-t(1-t)d^2(\gamma(0),\gamma(l))
\]
for all $x\in X$ and all geodesics $\gamma:[0,l]\to X$ (that is, an extension of the so-called Bruhat-Tits $\mathrm{CN}$-inequality \cite{BruhatTits1972} to geodesics, see e.g.\ Proposition 2.3 in \cite{Sturm2003}). Any $\CAT$ space is uniquely geodesic. A complete $\CAT$ space is called a Hadamard space. As mentioned in the introduction, we refer to \cite{AlexanderKapovitchPetrunin2023,Bacak2014a,BridsonHaefliger1999} for comprehensive overviews of geodesic metric spaces, $\CAT$ spaces and Hadamard spaces, including alternative definitions.

Another characterization of $\CAT$ spaces that will be useful in this paper was given by Berg and Nikolaev \cite{BergNikolaev2008} using their so-called quasi-inner product (also called quasi-linearization function), that is the map defined by
\[
\ql{xy}{uv}:=\frac{1}{2}\left( d^2(x,v) + d^2(y,u) - d^2(x,u)-d^2(y,v) \right)
\]
for all $x,y,u,v\in X$, where we wrote $\overrightarrow{xy},\overrightarrow{uv}$ as a shorthand for pairs $(x,y),(u,v)\in X^2$. As shown in \cite{BergNikolaev2008}, in any metric space $(X,d)$ this function is the unique function $X^2\times X^2\to\mathbb{R}$ such that for all $x,y,u,v,w\in X$: (1) $\ql{xy}{xy} = d^2(x, y)$; (2) $\ql{xy}{uv}=\ql{uv}{xy}$; (3) $\ql{xy}{uv}=-\ql{yx}{uv}$; (4) $\ql{xy}{uv}+\ql{xy}{vw}=\ql{xy}{uw}$.
It then follows from the results in \cite{BergNikolaev2008} that a geodesic metric space $(X,d)$ is a $\CAT$ space if, and only if,
\begin{equation}
\ql{xy}{uv}\leq d(x,y)d(u,v)\tag{CS}\label{CS}
\end{equation}
for all $x,y,u,v\in X$, i.e.\ where a metric version of the Cauchy-Schwarz inequality holds.

The most important notion for our paper regarding $\CAT$ spaces is that of their tangent spaces as developed in the work of Nikolaev \cite{Nikolaev1995}, as they can be used to provide analogs of fundamental notions from duality theory of linear spaces and manifolds. We essentially follow the exposition and (mostly) the notation of \cite{LewisLopezAcedoNicolae2024} and generally refer to \cite{AlexanderKapovitchPetrunin2023,BridsonHaefliger1999} for further exposition and proofs. Throughout, let $(X,d)$ be a $\CAT$ space. For nonconstant geodesics $\gamma$ and $\eta$ issuing from a point $x\in X$, their Alexandrov angle $\angle_x(\gamma,\eta)$ is defined by
\[
\angle_x(\gamma,\eta):=\lim_{s,t\to 0^+}\bar{\angle}_x(\gamma(s),\eta(t)),
\]
with $\bar{\angle}_x(y,z)$, generally, referring to the comparison angle, defined via the comparison triangle $\bar{\Delta}(\bar{x},\bar{y},\bar{z})$ of the geodesic triangle $\Delta(x,y,z)\subseteq X$, as usual. The Alexandrov angle $\angle_x$ now defines a pseudometric on the set of all nonconstant geodesics issuing from $x$. We write $\Sigma'_xX$ for the set of all equivalence classes of such geodesics under the equivalence relation defined by $\angle_x(\gamma,\eta)= 0$ and we still write $\angle_x$ for the Alexandrov angle extended to these equivalence classes in the obvious way. The completion $(\Sigma_xX,\angle_x)$ of the space $(\Sigma'_xX,\angle_x)$ is called the metric space of directions from $x$ and we denote the elements of it still by letters also used for geodesics, that is $\gamma$, $\eta$, etc. The tangent space $T_xX$ of $X$ at $x$ is then the Euclidean cone over $\Sigma_xX$, that is $T_xX:=(\Sigma_xX\times [0,\infty) )/\sim$ where $(\gamma,t)\sim(\eta,s)$ if, and only if, $t=s=0$ or $t=s>0$ and $\gamma=\eta$. For brevity, we write $t\gamma$ for the equivalence class $[(\gamma,t)]_\sim$ and given $u=t\gamma$ and $\lambda\geq 0$, we write $\lambda u:=(\lambda t)\gamma$. On $T_xX$, we define a metric
\[
d_x(t\gamma,s\eta):=\sqrt{t^2+s^2-2ts\cos\angle_x(\gamma,\eta)}.
\]
The space $TX=\bigcup_{x\in X}T_xX$ is called the tangent bundle of $X$. It in particular follows from the results of Nikolaev \cite{Nikolaev1995} that $T_xX$ is a complete $\CAT$ space, that is a Hadamard space. If $X$ is a Hilbert space, then $T_xX$ reduces to $X$ and if $X$ is a Hadamard manifold, then $T_xX$ reduces to the usual Riemannian tangent space of $X$ at $x$.

We now require some further notation and structure on $T_xX$. We write $0_x:=0\gamma$ and we introduce the notations $\norm{t\gamma}_x:=d_x(0_x,t\gamma)=t$ and
\[
g_x(t\gamma,s\eta):=\frac{1}{2}\left(\norm{t\gamma}_x^2+\norm{s\eta}_x^2-d_x^2(t\gamma,s\eta)\right)=ts\cos\angle_x(\gamma,\eta).
\]
Note that $g_x(t\gamma,s\eta)=\ql{0_xt\gamma}{0_xs\eta}_x$, where we wrote $\ql{\cdot}{\cdot}_x$ for the quasi-inner product on $T_xX$, and so $g_x(t\gamma,s\eta)\leq \norm{t\gamma}_x\norm{s\eta}_x$ by \eqref{CS}, as well as $g_x(t\gamma,t\gamma)=\norm{t\gamma}_x^2$, $g_x(t\gamma,s\eta)=g_x(s\eta,t\gamma)$ and $g_x(t\gamma,s\eta)=tg_x(\gamma,s\eta)$.

Following e.g.\ \cite{Ohta2012} (see also \cite{GongWuYu2021,LeGouicParisRigolletStromme2022,Lammers2024,LytchakNagano2019}), we define the function $\log_x:X\to T_xX$ by $\log_x a:=d(x,a)\gamma_{x,a}$ for $a\neq x$ as well as $\log_x x:=0_x$, which provides an extension of the well-known inverse exponential map, crucial to the study of Riemannian manifolds and their curvature, to this metric setting.\footnote{Indeed, under suitable assumptions on the extendibility of geodesics, one can consider the function $\exp_x t\gamma:=\gamma(t)$ for $\gamma\in \Sigma_xX$, of which $\log_x$ is an inverse of and which provides a metric analog of the exponential map of Riemannian manifolds. We will however not rely on this map in the rest of this paper.} Crucially, note that $\log_x$ is nonexpansive:
\begin{lemma}[{see e.g.\ \cite[eq.\ (2.4)]{LeGouicParisRigolletStromme2022} or \cite[p.\ 224]{GongWuYu2021}}]\label{lognonexp}
For any $x,a,b\in X$:
\[
d_x(\log_xa,\log_xb)\leq d(a,b).
\]
\end{lemma}

The most important property of the pseudo-Riemannian metric $g_x$ on $T_xX$ is the following:

\begin{lemma}[{essentially \cite[Proposition 2.16]{ChaipunyaKohsakaKumam2021}}]\label{tangentCat0}
For any $x,a,b\in X$:
\[
g_x(t\log_x a,s\log_x b)\geq \frac{ts}{2}(d^2(x,a)+d^2(x,b)-d^2(a,b)).
\]
\end{lemma}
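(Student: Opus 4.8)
The plan is to unfold every definition appearing on the left-hand side and to recognize the right-hand side as a comparison-angle expression, after which the whole statement collapses onto the fundamental $\CAT$ comparison property that Aleksandrov angles are dominated by their Euclidean comparison angles.

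First I would dispose of the degenerate cases. If $t=0$ or $s=0$, then $t\log_x a=0_x$ or $s\log_x b=0_x$, so the left-hand side vanishes while the right-hand side does too; and if $a=x$ (so $\log_x a=0_x$ and $d(a,b)=d(x,b)$) or $b=x$, both sides are again $0$. Hence I may assume $t,s>0$ and $a,b\neq x$. Next I would compute the left-hand side directly: since $\log_x a=d(x,a)\gamma_{x,a}$ and $\log_x b=d(x,b)\gamma_{x,b}$, the scalar-multiplication rule gives $t\log_x a=(t\,d(x,a))\gamma_{x,a}$ and $s\log_x b=(s\,d(x,b))\gamma_{x,b}$, so from the identity $g_x(p\gamma,q\eta)=pq\cos\angle_x(\gamma,\eta)$ one gets $g_x(t\log_x a,s\log_x b)=ts\,d(x,a)d(x,b)\cos\angle_x(\gamma_{x,a},\gamma_{x,b})$.

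On the other side, the Euclidean law of cosines applied to the comparison triangle $\bar\Delta(\bar x,\bar a,\bar b)$ yields $d(x,a)d(x,b)\cos\bar\angle_x(a,b)=\tfrac12(d^2(x,a)+d^2(x,b)-d^2(a,b))$, so the right-hand side of the lemma equals $ts\,d(x,a)d(x,b)\cos\bar\angle_x(a,b)$. Dividing the desired inequality by the strictly positive factor $ts\,d(x,a)d(x,b)$, it then suffices to establish $\cos\angle_x(\gamma_{x,a},\gamma_{x,b})\geq\cos\bar\angle_x(a,b)$, which, since both angles lie in $[0,\pi]$ where cosine is decreasing, is equivalent to $\angle_x(\gamma_{x,a},\gamma_{x,b})\leq\bar\angle_x(a,b)$.

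This last inequality is the heart of the matter and is where I expect the only genuine work to lie. By definition $\angle_x(\gamma_{x,a},\gamma_{x,b})=\lim_{s',t'\to 0^+}\bar\angle_x(\gamma_{x,a}(s'),\gamma_{x,b}(t'))$, and the crucial fact is the monotonicity of comparison angles in $\CAT$ spaces: the map $(s',t')\mapsto\bar\angle_x(\gamma_{x,a}(s'),\gamma_{x,b}(t'))$ is nondecreasing, so the limit as $s',t'\to 0^+$ is in fact the infimum over all $s',t'$. Evaluating at $s'=d(x,a)$ and $t'=d(x,b)$, where $\gamma_{x,a}(s')=a$ and $\gamma_{x,b}(t')=b$, then gives $\angle_x(\gamma_{x,a},\gamma_{x,b})\leq\bar\angle_x(a,b)$, as needed. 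This monotonicity is precisely the standard consequence of the $\mathrm{CN}$-type inequality defining $\CAT$ spaces (equivalently, the assertion that the angles of a geodesic triangle are bounded by the corresponding comparison angles, as in \cite{BridsonHaefliger1999}); one may either cite it directly or rederive it by a short comparison estimate from the defining inequality. Reassembling the two displayed identities from the previous step then completes the proof.
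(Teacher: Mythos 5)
Your argument is correct. The paper itself gives no proof of this lemma---it simply cites Proposition~2.16 of \cite{ChaipunyaKohsakaKumam2021}---and your derivation is the standard one underlying that reference: unfold $g_x(t\log_x a,s\log_x b)=ts\,d(x,a)d(x,b)\cos\angle_x(\gamma_{x,a},\gamma_{x,b})$, rewrite the right-hand side via the Euclidean law of cosines as $ts\,d(x,a)d(x,b)\cos\bar\angle_x(a,b)$, and reduce to the angle comparison $\angle_x(\gamma_{x,a},\gamma_{x,b})\leq\bar\angle_x(a,b)$, which follows from the monotonicity of comparison angles in $\CAT$ spaces. The degenerate cases are handled correctly, and the only external input you invoke (monotonicity of $(s',t')\mapsto\bar\angle_x(\gamma(s'),\eta(t'))$, e.g.\ Proposition~II.3.1 in \cite{BridsonHaefliger1999}) is exactly the right citation.
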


Also, we will in the following use that $g_x$ is Lipschitz continuous in both arguments, which we show in the following lemma:

\begin{lemma}\label{nonexpCat0}
For $x\in X$ and $u,v,w\in T_xX$: $\vert g_x(u,v)-g_x(u,w)\vert\leq \norm{u}_xd_x(v,w)$.
\end{lemma}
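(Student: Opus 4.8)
The plan is to reduce the Lipschitz estimate for $g_x$ to the metric Cauchy--Schwarz inequality \eqref{CS} by exploiting the bilinear-like algebraic identities that $g_x$ inherits from the quasi-inner product. Recall from the excerpt that $g_x(u,v)=\ql{0_xu}{0_xv}_x$, so the quantity $g_x(u,v)-g_x(u,w)$ is a difference of two quasi-inner products sharing the first pair $\overrightarrow{0_xu}$. The natural first step is to invoke the additivity property (4) of the quasi-linearization function, namely $\ql{ab}{cd}+\ql{ab}{de}=\ql{ab}{ce}$, applied in the tangent space $T_xX$. Writing everything in terms of the quasi-inner product $\ql{\cdot}{\cdot}_x$, this should let me combine the two terms into a single quasi-inner product whose second argument is a ``difference direction'' between $v$ and $w$.

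Concretely, I would first establish the identity $g_x(u,v)-g_x(u,w)=\ql{0_xu}{0_xv}_x-\ql{0_xu}{0_xw}_x=\ql{0_xu}{wv}_x$, using property (4) in the form $\ql{0_xu}{wv}_x+\ql{0_xu}{0_xw}_x=\ql{0_xu}{0_xv}_x$ (after also using antisymmetry properties (2),(3) to orient the pairs correctly). Once this is in hand, I apply the metric Cauchy--Schwarz inequality \eqref{CS} on $T_xX$, which gives $\vert\ql{0_xu}{wv}_x\vert\le d_x(0_x,u)\,d_x(w,v)=\norm{u}_x\,d_x(v,w)$, since $d_x(0_x,u)=\norm{u}_x$ and $d_x(w,v)=d_x(v,w)$. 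Taking absolute values and combining the two displays yields exactly the claimed bound $\vert g_x(u,v)-g_x(u,w)\vert\le\norm{u}_x d_x(v,w)$.

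The main obstacle I anticipate is purely bookkeeping rather than conceptual: I must be careful that \eqref{CS} is genuinely available on $T_xX$, which is legitimate because the excerpt already records that $T_xX$ is itself a Hadamard (hence $\CAT$) space, so the metric Cauchy--Schwarz inequality holds there with its own quasi-inner product $\ql{\cdot}{\cdot}_x$. I also need to keep the orientation of the ordered pairs straight when applying properties (3) and (4), since a sign error there would flip $\overrightarrow{wv}$ to $\overrightarrow{vw}$; but because the final bound is symmetric in $v,w$ (it involves $d_x(v,w)=d_x(w,v)$) and we take absolute values anyway, any such orientation choice is harmless. Thus the whole argument is a short two-line computation: an algebraic reduction via the defining identities of the quasi-inner product, followed by one application of \eqref{CS}.
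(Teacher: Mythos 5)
Your proposal is correct and follows essentially the same route as the paper: both identify the difference $g_x(u,v)-g_x(u,w)$ with a single quasi-inner product of the form $\ql{0_xu}{vw}_x$ on $T_xX$ (you via the additivity axiom (4), the paper by directly expanding the definition of $g_x$ in terms of $d_x$ — two trivially equivalent computations) and then conclude with one application of \eqref{CS} in the Hadamard space $T_xX$, handling the absolute value by the antisymmetry of the quasi-inner product.
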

\begin{proof}
Note that
\begin{align*}
g_x(u,v)-g_x(u,w)&=\frac{1}{2}\norm{v}_x^2-\frac{1}{2}\norm{w}_x^2-\frac{1}{2}d_x^2(u,v)+\frac{1}{2}d_x^2(u,w)\\
&=\frac{1}{2}\left(d^2_x(0_x,v)+d_x^2(u,w)-d_x^2(0_x,w)-d_x^2(u,v)\right)=\ql{0_xu}{wv}_x.
\end{align*}
Using that $T_xX$ is a $\CAT$ space, \eqref{CS} applied to $\ql{0_xu}{wv}_x$ yields
\[
g_x(u,v)-g_x(u,w)=\ql{0_xu}{wv}_x\leq d_x(0_x,u)d_x(v,w)=\norm{u}_xd_x(v,w).
\]
Analogously, we obtain $g_x(u,w)-g_x(u,v)\leq \norm{u}_xd_x(v,w)$. This yields the claim.
\end{proof}

Following \cite{GongWuYu2021}, we call a Hadamard space $X$ a Hilbert-Hadamard space if every tangent space $T_xX$ is flat (see e.g.\ \cite{KhatibzadehRanjbar2017}), that is if the $\mathrm{CN}$-inequality on $(T_xX,d_x)$ is actually an equality, i.e.\ if
\[
d_x^2(\gamma(tl),u)= (1-t)d_x^2(\gamma(0),u)+td_x^2(\gamma(l),u)-t(1-t)d_x^2(\gamma(0),\gamma(l))
\]
for all $u\in T_xX$, $t\in [0,1]$ and all geodesics $\gamma:[0,l]\to T_xX$, for every $x\in X$.

\begin{remark}
The above formulation of Hilbert-Hadamard spaces diverges slightly from the definition used in \cite{GongWuYu2021}, where instead of flatness, it is required that $T_xX$ isometrically embeds into a Hilbert space (which in turn is equivalent to being isometric to a closed and convex subset of a Hilbert space). However, this condition is equivalent to flatness, which follows by a result of Berestovskij and Nikolaev \cite[Theorem 10.3]{BerestovskijNikolaev1993}, a slightly different rendition of which can be found in \cite[Theorem 10.10.13]{BuragoBuragoIvanov2001}. A related exercise can be found in \cite[Advanced Exercise 0.0.2]{AlexanderKapovitchPetrunin2019}, and the result also appears in the unpublished and undated lecture notes of Lurie on Hadamard spaces \cite[Theorem 7.2]{Lurie2026}.
\end{remark}

\begin{example}\label{exHilbertHadamard}
As discussed in Example 3.2 of \cite{GongWuYu2021}, every (separable) Hilbert space and Hadamard manifold is a (separable) Hilbert-Hadamard space, as are (separable) complete simply connected Hilbert manifolds of nonpositive sectional curvature, that is (potentially infinite-dimensional) generalizations of Hadamard manifolds modeled on Hilbert spaces. Further, Hilbert-Hadamard spaces are closed under taking closed and convex subsets. Lastly, Hilbert-Hadamard spaces are closed under ``$L^2$-continuum products'': By \cite[Proposition 3.13, (1)]{GongWuYu2021}, if $(T,\mathcal{T},\tau)$ is a finite measure space and $X$ is a Hilbert-Hadamard space, then so is the space of all square-integrable functions $L^2(T,X,\tau)$ (discussed in detail in Section \ref{sec:intCAT} later on). By \cite[Proposition 3.13, (2)]{GongWuYu2021}, if $X$ and $T$ are further separable,\footnote{A finite measure space $(T,\mathcal{T},\tau)$ is called separable if the metric space $(\mathcal{T},d_\tau)$ with $d_\tau(A,B)=\tau(A\triangle B)$ is separable, where $A\triangle B$ is the symmetric difference of $A,B\in\mathcal{T}$. For example, any probability measure on the Borel space of a Polish metric space induces a separable space.} then so is $L^2(T,X,\tau)$. Using this construction, the class of Hilbert-Hadamard spaces in particular reaches beyond the (infinite-dimensional) manifold setting, including spaces such as the space of $L^2$-Riemannian metrics on a finite dimensional closed smooth manifold (see \cite[Definition 4.2]{GongWuYu2021} and \cite[Remark 9.8]{GongWuYu2021}).
\end{example}

The relevance of the flatness of $T_xX$ for the present paper is that in that context, the pseudo-Riemannian metric $g_x(u,v)$ is affine, that is it is both convex and concave, in both arguments. This follows from a similar argument as used in (the proof of) \cite[Theorem 2.4, (4)]{LeGouicParisRigolletStromme2022}.

\begin{lemma}\label{affine}
Let $x\in X$. If $T_xX$ is flat, then $g_x(u,v)$ is affine in both arguments.
\end{lemma}
\begin{proof}
Fix a geodesic $\gamma:[0,l]\to T_xX$ and $u\in T_xX$. As $T_xX$ is flat, we have
\[
d^2_x(\gamma(tl),u)= (1-t)d^2_x(\gamma(0),u)+td^2_x(\gamma(l),u)-t(1-t)d^2_x(\gamma(0),\gamma(l))
\]
for all $t\in [0,1]$. Using the definition of $g_x$, we hence get 
\begin{align*}
&g_x(\gamma(tl),u)\\
&\qquad=\frac{1}{2}\left(\norm{\gamma(tl)}_x^2+\norm{u}_x^2-d_x^2(\gamma(tl),u)\right)\\
&\qquad= \frac{1}{2}\left((1-t)\norm{\gamma(0)}_x^2+t\norm{\gamma(l)}_x^2+\norm{u}_x^2-
(1-t)d^2_x(\gamma(0),u)-td^2_x(\gamma(l),u)\right)\\
&\qquad=\frac{1-t}{2}\left(\norm{\gamma(0)}_x^2+\norm{u}_x^2-d^2_x(\gamma(0),u)\right)+\frac{t}{2}\left(\norm{\gamma(l)}_x^2+\norm{u}_x^2-d^2_x(\gamma(l),u)\right)\\
&\qquad=(1-t)g_x(\gamma(0),u)+tg_x(\gamma(l),u).
\end{align*}
Hence, $g_x$ is affine in its left argument. By symmetry, $g_x$ is also affine in its right argument.
\end{proof}

\subsection{Monotone vector fields, maximality and resolvents}\label{sec:opCAT}

We now discuss monotone vector fields in this metric context as introduced in \cite{ChaipunyaKohsakaKumam2021},\footnote{The work \cite{ChaipunyaKohsakaKumam2021} relies on a slightly different approach towards the tangent spaces of a $\CAT$ space, which however has no impact on the present paper. All results cited from \cite{ChaipunyaKohsakaKumam2021} hold true in our setup as well.} extending monotone operators on Hilbert spaces and monotone vector fields on Hadamard manifolds as introduced in \cite{Nemeth1999,DaCruzNetoFerreiraLucambioPerez2000} (see also \cite{LiLopezMartinMarquez2009,LiLopezMartinMarquezWang2011,WangLopezMartinMarquezLi2010}). Precisely, a monotone vector field on a $\CAT$ space $X$ is a mapping $A:X\to 2^{TX}$ such that $A(x)\subseteq T_xX$ and 
\[
g_x(u,\log_x y)\leq - g_y(v,\log_y x)
\] 
for all $(x,u),(y,v)\in A$. While not introduced in \cite{ChaipunyaKohsakaKumam2021}, in analogy to \cite{LiLopezMartinMarquez2009} we call the mapping strongly monotone (with modulus $\alpha>0$) if
\[
g_x(u,\log_x y)\leq - g_y(v,\log_y x) - \alpha d^2(x,y)
\] 
for all $(x,u),(y,v)\in A$.

We denote the set of zeros of $A$ by $\zer A:=\{x\in X\mid 0_x\in A(x)\}$. If $A$ is strongly monotone, then it immediately follows that its zero is unique if it exists.

There is a plethora of concrete instantiations that allow one to capture a wide variety of optimization problems as the zero problem of an associated monotone vector field. This in particular includes minimization and saddle point problems, variational inequalities, equilibrium problems and fixed point problems, among many others. For various such examples of monotone operators over Hilbert spaces, we refer to e.g.\ \cite{BauschkeCombettes2017}. For similar constructions over Hadamard manifolds, we again refer to e.g.\ \cite{LiLopezMartinMarquez2009,LiLopezMartinMarquezWang2011,Nemeth1999,DaCruzNetoFerreiraLucambioPerez2000,WangLopezMartinMarquezLi2010}, many of which immediately carry over to the Hadamard space setting. Throughout this paper, we focus on minimization and saddle point problems as two running examples to illustrate the various notions.

\begin{example}\label{subDiffEx}
The canonical example of a monotone vector field is the subdifferential 
\[
\partial f(x):=\{u\in T_xX\mid f(y)\geq f(x)+g_x(u,\log_x y)\text{ for all }y\in X\}
\]
of a proper, convex\footnote{A function $f:X\to (-\infty,+\infty]$ is called convex if $f(\gamma(tl))\leq (1-t)f(\gamma(0))+tf(\gamma(l))$ for any geodesic $\gamma:[0,l]\to X$ and any $t\in [0,1]$.} and lower-semicontinuous function $f:X\to (-\infty,+\infty]$, as defined in the general setting of Hadamard spaces in \cite{ChaipunyaKohsakaKumam2021}. The monotonicity of $\partial f$ follows immediately from the definition, see also \cite[Proposition 3.7]{ChaipunyaKohsakaKumam2021}. Further, it follows from (the proof of) \cite[Corollary 4.5]{LewisLopezAcedoNicolae2024} that
\[
\argmin f:=\{x^*\in X\mid f(x^*)\leq f(x)\text{ for all }x\in X\}=\zer \partial f.
\]
Over Hilbert spaces or Hadamard manifolds, this object naturally reduces to the subdifferential studied there. The recent work of Lewis, L\'opez-Acedo and Nicolae \cite{LewisLopezAcedoNicolae2024} provides an alternative characterization of this object, over locally compact spaces with the geodesic extension property, via normal cones, and establishes further substantial structure theory (even on geodesic metric spaces with general upper bounded curvature). Indeed, that the subdifferential studied in \cite{LewisLopezAcedoNicolae2024} coincides with the one studied in \cite{ChaipunyaKohsakaKumam2021} over suitable Hadamard spaces follows from \cite[Proposition 4.4]{LewisLopezAcedoNicolae2024}.
\end{example}

\begin{example}\label{saddleEx}
Let $X,Y$ be two Hadamard spaces and $l:X\times Y\to\mathbb{R}$ be a saddle function (see e.g.\ \cite{Kohsaka2026,LiLopezMartinMarquez2009}), that is $l(x,\cdot)$ is convex and lower-semicontinuous for all $x\in X$ and $l(\cdot,y)$ is concave (i.e.\ $-l(\cdot,y)$ is convex) and upper-semicontinuous for all $y\in Y$. Similar to \cite{LiLopezMartinMarquez2009} (as well as \cite{Rockafellar1970,Rockafellar1976}), the field $A_l$ defined by
\[
A_l(x,y):=\partial [-l(\cdot,y)](x)\times \partial [l(x,\cdot)](y)
\]
is a monotone vector field over $X\times Y$. To see this, first note that $X\times Y$ is a Hadamard space under the $\ell^2$-product metric $d^2((x,y),(x',y'))=d^2(x,x')+d^2(y,y')$ (see \cite[Example 1.15]{BridsonHaefliger1999}), from which it follows that $T_{(x,y)}(X\times Y)=T_xX\times T_yY$ as well as 
\[
g_{(x,y)}((u,v),(u',v'))=g_x(u,u')+g_y(v,v')\text{ and }\log_{(x,y)}(z,z')=(\log_xz,\log_{y}z').
\]
The monotonicity of $A_l$ then rather immediately follows from the definition of the subdifferential. Further, it holds that $\minimax l=\zer A_l$, where
\[
\minimax l:= \{(x^*,y^*)\in X\times Y\mid l(x,y^*)\leq l(x^*,y^*)\leq l(x^*,y)\text{ for all }(x,y)\in X\times Y\},
\]
which immediately follows from the corresponding property $\argmin f=\zer \partial f$ for a convex function $f$ as discussed in Example \ref{subDiffEx}.
\end{example}

The subdifferential also provides a suitable example for a strongly monotone vector field in the case where $f$ is a strongly convex function with modulus $\alpha>0$, i.e.\ where
\[
f(\gamma(tl))\leq (1-t)f(\gamma(0))+tf(\gamma(l)) -t(1-t)\frac{\alpha}{2}d^2(\gamma(0),\gamma(l))
\]
for any geodesic $\gamma:[0,l]\to X$ and any $t\in [0,1]$, as the following Proposition \ref{strConToStrMon} shows. In particular, this result generalizes a similar result given in \cite{WangLopezMartinMarquezLi2010} in the setting of monotone vector fields over Hadamard manifolds (which also requires a rather involved proof compared to the corresponding result in Hilbert spaces, originally due to Rockafellar \cite{Rockafellar1976}, see also \cite[Example 22.4]{BauschkeCombettes2017}).

\begin{proposition}\label{strConToStrMon}
If $f$ is a strongly convex function with modulus $\alpha>0$, then $\partial f$ is strongly monotone with modulus $\alpha$.
\end{proposition}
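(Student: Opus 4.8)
The plan is to first upgrade the defining (first-order) subgradient inequality of $\partial f$ to a \emph{strong} subgradient inequality that reflects the strong convexity constant, and then to symmetrize and add. Concretely, fix $(x,u),(y,v)\in\partial f$ (if no such pairs exist the monotonicity condition holds vacuously, and the case $x=y$ is trivial since both angle terms vanish). I aim to prove first that
\[
f(y)\geq f(x)+g_x(u,\log_x y)+\frac{\alpha}{2}d^2(x,y),
\]
and, by the identical argument with the roles of $(x,u)$ and $(y,v)$ exchanged,
\[
f(x)\geq f(y)+g_y(v,\log_y x)+\frac{\alpha}{2}d^2(x,y).
\]
Adding these and cancelling $f(x)+f(y)$ from both sides yields precisely $g_x(u,\log_x y)\leq -g_y(v,\log_y x)-\alpha d^2(x,y)$, which is strong monotonicity with constant $\alpha$.

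To establish the strong subgradient inequality, I would set $l:=d(x,y)$, let $\gamma:[0,l]\to X$ be the (unique) geodesic from $x$ to $y$, and evaluate two facts at the intermediate point $\gamma(tl)$ for $t\in(0,1)$. On the one hand, the definition of $u\in\partial f(x)$, applied with $y$ replaced by $\gamma(tl)$, gives $f(\gamma(tl))\geq f(x)+g_x(u,\log_x\gamma(tl))$. On the other hand, strong convexity of $f$ along $\gamma$ gives $f(\gamma(tl))\leq(1-t)f(x)+tf(y)-t(1-t)\frac{\alpha}{2}l^2$. Chaining these, subtracting $f(x)$, dividing by $t>0$, and letting $t\to 0^+$ will produce the claimed inequality, provided the logarithm term behaves linearly in $t$.

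The key geometric step, which I expect to be the main (though still routine) obstacle, is exactly that linearity: the identity $\log_x\gamma(tl)=t\log_x y$. This holds because $\gamma(tl)$ lies on the geodesic $\gamma_{x,y}$, so by uniqueness of geodesics in a $\CAT$ space the geodesic $\gamma_{x,\gamma(tl)}$ represents the same direction as $\gamma_{x,y}$, while $d(x,\gamma(tl))=tl$; hence $\log_x\gamma(tl)=tl\,\gamma_{x,y}=t\log_x y$. Combined with the homogeneity $g_x(u,tw)=t\,g_x(u,w)$ of the pseudo-inner product, this gives $g_x(u,\log_x\gamma(tl))=t\,g_x(u,\log_x y)$, and the limit argument goes through cleanly. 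The remaining manipulations are elementary, so the whole argument is noticeably more transparent than the corresponding manifold proof in \cite{WangLopezMartinMarquezLi2010}, the extra structure of the strong convexity inequality along geodesics doing the essential work.
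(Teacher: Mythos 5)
Your proposal is correct and follows essentially the same route as the paper's proof: apply the subgradient inequality at the intermediate point $\gamma_{x,y}(tl)$, combine with the strong convexity inequality along the geodesic, divide by $t$, let $t\to 0^+$, and symmetrize; the only (cosmetic) differences are that you take the limit before adding the two inequalities rather than after, and that you justify the key identity $g_x(u,\log_x\gamma(tl))=t\,g_x(u,\log_x y)$ by observing directly that $\log_x\gamma(tl)=t\log_x y$ via uniqueness of geodesics and homogeneity of $g_x$, whereas the paper derives the same identity through the Lipschitz estimate of Lemma \ref{nonexpCat0} and an explicit computation that the relevant Aleksandrov angle vanishes.
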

\begin{proof}
Assuming w.l.o.g.\ that $x\neq y$, and writing $l=d(x,y)$, note that for $(x,u),(y,v)\in\partial f$, we have
\[
(1-t)f(x)+tf(y)-t(1-t)\frac{\alpha}{2}d^2(x,y)\geq f(\gamma_{x,y}(tl))\geq f(x)+g_x(u,\log_x\gamma_{x,y}(tl))
\]
and so
\[
f(y)-f(x)\geq \frac{g_x(u,\log_x\gamma_{x,y}(tl))}{t}+(1-t)\frac{\alpha}{2}d^2(x,y)
\]
for any $t\in (0,1]$. Similarly, we get
\[
f(x)-f(y)\geq \frac{g_y(v,\log_y\gamma_{y,x}(tl))}{t}+(1-t)\frac{\alpha}{2}d^2(x,y)
\]
for all $t\in (0,1]$ so that combined, using $g_x(u,\log_x\gamma_{x,y}(tl))/t=g_x(u,\log_xy)$ (and similarly for $y$ and $v$), we have
\[
-g_y(v,\log_yx)-(1-t)\alpha d^2(x,y)\geq g_x(u,\log_xy).
\]
Sending $t\to 0$ yields $-g_y(v,\log_yx)-\alpha d^2(x,y)\geq g_x(u,\log_xy)$ by which $\partial f$ is strongly monotone with modulus $\alpha>0$. To see that $g_x(u,\log_x\gamma_{x,y}(tl))/t=g_x(u,\log_xy)$ (and similarly for $y$ and $v$), note that we have $\log_x\gamma_{x,y}(tl)=d(x,\gamma_{x,y}(tl))\gamma_{x,\gamma_{x,y}(tl)}=td(x,y)\gamma_{x,\gamma_{x,y}(tl)}$ so that using Lemma \ref{nonexpCat0}, it holds that
\begin{align*}
\vert \tfrac{1}{t}g_x(u,\log_x\gamma_{x,y}(tl))-g_x(u,\log_xy)\vert &=d(x,y)\vert g_x(u,\gamma_{x,\gamma_{x,y}(tl)})-g_x(u,\gamma_{x,y})\vert\\
&\leq d(x,y)\norm{u}_xd_x(\gamma_{x,\gamma_{x,y}(tl)},\gamma_{x,y})\\
&= d(x,y)\norm{u}_x\sqrt{2-2\cos\angle_x(\gamma_{x,\gamma_{x,y}(tl)},\gamma_{x,y})}.
\end{align*}
Now, by definition we have
\begin{align*}
\angle_x(\gamma_{x,\gamma_{x,y}(tl)},\gamma_{x,y})&=\lim_{s,s'\to 0^+}\bar{\angle}_x(\gamma_{x,\gamma_{x,y}(tl)}(s),\gamma_{x,y}(s'))\\
&=\lim_{s,s'\to 0^+}\bar{\angle}_x(\gamma_{x,y}(s),\gamma_{x,y}(s'))\\
&=\angle_x(\gamma_{x,y},\gamma_{x,y})=0,
\end{align*}
using that $\gamma_{x,\gamma_{x,y}(tl)}(s)=\gamma_{x,y}(stl)$ for suitably small $s$. Hence, we have
\begin{align*}
&\vert \tfrac{1}{t}g_x(u,\log_x\gamma_{x,y}(t))-g_x(u,\log_xy)\vert\\
&\qquad\leq d(x,y)\norm{u}_x\sqrt{2-2\cos\angle_x(\gamma_{x,\gamma_{x,y}(tl)},\gamma_{x,y})}=0.
\end{align*}
\end{proof}

\begin{remark}
Similar arguments as used in the proof of Proposition \ref{strConToStrMon} can also be used to show that the subdifferential of a strongly convex function $f$ with modulus $\alpha$ satisfies
\[
\partial f(x):=\left\{u\in T_xX\mid f(y)\geq f(x)+g_x(u,\log_x y)+\frac{\alpha}{2}d^2(x,y)\text{ for all }y\in X\right\}.
\]
This can then in particular be used to show that the saddle field $A_l$ from Example \ref{saddleEx} is strongly monotone with modulus $\min\{\alpha,\beta\}$ whenever $l$ is a saddle function in the sense of Example \ref{saddleEx} which is also strongly convex in its right argument with modulus $\alpha>0$ and strongly concave (i.e.\ $-l$ is strongly convex) in its left argument with modulus $\beta>0$.
\end{remark}

Following \cite{ChaipunyaKohsakaKumam2021} (which in turn generalizes \cite{LiLopezMartinMarquezWang2011}), we define the resolvent $J_\lambda$ via
\[
J_\lambda x:=\{z\in X\mid \tfrac{1}{\lambda}\log_zx\in A(z)\}.
\]
As shown in (the proof of) \cite[Proposition 3.4]{ChaipunyaKohsakaKumam2021}, it follows from Lemma \ref{tangentCat0} that if $A$ is monotone, then for any $x\in X$ and $\lambda>0$, a $z\in X$ such that $\frac{1}{\lambda}\log_zx\in A(z)$ is necessarily unique. In that case, we identify $J_\lambda$ with the corresponding (potentially partial) function from $X$ to $X$.

A monotone vector field $A$ is called maximal if its graph $\mathrm{gra}A:=\{(x,u)\in X\times TX\mid u\in Ax\}$ cannot be extended properly while preserving monotonicity. By the well-known theorem of Minty \cite{Minty1962}, the maximality of a monotone operator over a Hilbert space is equivalent to the totality of the resolvent. This extends to the setting of Hadamard manifolds provided that $\dom A:=\{x\in X\mid A(x)\neq\emptyset\}=X$ (see \cite[Remark 4.4]{LiLopezMartinMarquez2009}).

As mentioned in \cite{ChaipunyaKohsakaKumam2021}, it is unknown whether this equivalence between maximality and totality extends to this general setting of $\CAT$ spaces. One direction however remains valid, as shown in \cite[Proposition 3.5]{ChaipunyaKohsakaKumam2021}: if the resolvents are all total, that is for any $\lambda>0$ and $x\in X$ there exists a $z\in X$ with $\tfrac{1}{\lambda}\log_zx\in A(z)$, then $A$ is maximal. Following \cite{ChaipunyaKohsakaKumam2021}, we say that   $A$ satisfies the surjectivity condition if all resolvents are total.
 
\begin{example}\label{resEx}
As shown in \cite[Proposition 3.8]{ChaipunyaKohsakaKumam2021}, the resolvent of the subdifferential $\partial f$ of a proper, convex and lower-semicontinuous function $f:X\to (-\infty,+\infty]$ is given by
\[
\mathrm{prox}_{\lambda}^fx:=\argmin_{y\in X}\left\{ f(y)+\frac{1}{2\lambda}d^2(x,y)\right\},
\]
that is the proximal map of $f$ (also called the Moreau-Yosida resolvent), which in this context of Hadamard spaces was first defined by Jost \cite{Jost1995}. In particular, as each $\mathrm{prox}_{\lambda}^f$ is total (see \cite[Lemma 2]{Jost1995}), $\partial f$ satisfies the surjectivity condition (and so is also maximal).
\end{example}

\begin{example}\label{resSaddleEx}
In the case of the saddle field $A_l$ for a saddle function $l:X\times Y\to\mathbb{R}$ over two Hadamard spaces $X$ and $Y$, it follows easily from the definition that its resolvent is given by
\[
\mathrm{R}_{\lambda}^l(x,y):=\argminimax_{(z,w)\in X\times Y}\left\{ l(z,w)-\frac{1}{2\lambda}d^2(z,x)+\frac{1}{2\lambda}d^2(w,y)\right\},
\]
that is the resolvent of the saddle function $l$ as defined in Hadamard spaces in the recent work \cite{Kohsaka2026} (generalizing \cite{LiLopezMartinMarquez2009,Rockafellar1970,Rockafellar1976}). In particular, by \cite[Theorem 5.1]{Kohsaka2026}, each $\mathrm{R}_{\lambda}^l$ is total, so that $A_l$ satisfies the surjectivity condition (and so is also maximal, as was obtained in a different way over Hadamard manifolds in \cite[Theorem 5.6]{LiLopezMartinMarquez2009}).
\end{example}

In terms of essential properties of the resolvent required in this paper, we have the following:

\begin{lemma}[{see \cite[Proposition 4.3]{ChaipunyaKohsakaKumam2021}}]
For any $\lambda>0$, $J_\lambda$ is nonexpansive, i.e.\ 
\[
d(J_\lambda x,J_\lambda y)\leq d(x,y)
\]
for any $x,y\in\mathrm{dom}(J_\lambda)$. Further, $\mathrm{Fix}(J_\lambda)=\zer A$.
\end{lemma}

The resolvent is in fact even firmly nonexpansive in the sense of Ariza-Ruiz, Leu\c{s}tean and L\'opez-Acedo \cite{ArizaRuizLeusteanLopezAcedo2014}, but we will not rely on this here.

At last, we will rely on the so-called Yosida approximate of the field $A$. We define this object here via
\[
A_\lambda x:=\tfrac{1}{\lambda}\log_{J_\lambda x} x.
\]
This seems to be distinct from the variant of the Yosida approximate introduced in \cite{ChaipunyaKohsakaKumam2021}, which relies on the so-called negative geodesics constructed using the geodesic extension property. However, the above definition precisely serves our purpose here as we have the following two crucial properties:

\begin{lemma}\label{YosidaProp}
For any $\lambda>0$ and any $x\in\mathrm{dom}(J_\lambda)$:
\begin{enumerate}
\item $A_\lambda x\in A(J_\lambda x)$,
\item $\norm{A_\lambda x}_{J_\lambda x}=\norm{\tfrac{1}{\lambda}\log_{J_\lambda x} x}_{J_\lambda x}=\frac{1}{\lambda}d(x,J_\lambda x)$.
\end{enumerate}
\end{lemma}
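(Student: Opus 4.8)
The plan is to prove both items by unwinding the definitions of the resolvent, of the logarithm map $\log$, and of the norm on the tangent cone, with essentially no computation required. Throughout I would write $z:=J_\lambda x$, which is a well-defined single point: since $x\in\mathrm{dom}(J_\lambda)$ such a $z$ exists, and since $A$ is monotone it is unique by (the proof of) Proposition 3.4 in \cite{ChaipunyaKohsakaKumam2021}, so that the identification of $J_\lambda$ with a partial function is legitimate.

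For item (1), I would simply recall that by the very definition of the resolvent, $z=J_\lambda x$ is characterized as the point satisfying $\frac{1}{\lambda}\log_z x\in A(z)$. On the other hand, by the definition of the Yosida approximate we have $A_\lambda x=\frac{1}{\lambda}\log_{J_\lambda x}x=\frac{1}{\lambda}\log_z x$. Matching these two statements gives $A_\lambda x\in A(z)=A(J_\lambda x)$ at once.

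For item (2), the first equality is nothing but the definition $A_\lambda x=\frac{1}{\lambda}\log_{J_\lambda x}x$. For the second equality I would distinguish two cases. If $x=z$, then $\log_z x=0_z$ by the definition of $\log$, so both sides vanish and the claim is trivial (this is exactly the situation $x\in\mathrm{zer}A$, i.e.\ $x$ a fixed point of $J_\lambda$). If $x\neq z$, then $\log_z x=d(z,x)\gamma_{z,x}$ by definition, and using the scalar-multiplication convention $\lambda(t\gamma)=(\lambda t)\gamma$ on $T_zX$ together with $\norm{t\gamma}_z=d_z(0_z,t\gamma)=t$, I obtain
\[
\norm{\tfrac{1}{\lambda}\log_z x}_z=\norm{\tfrac{1}{\lambda}d(z,x)\,\gamma_{z,x}}_z=\tfrac{1}{\lambda}d(z,x)=\tfrac{1}{\lambda}d(x,J_\lambda x),
\]
as claimed.

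I do not expect any genuine obstacle here: the lemma is a direct definitional consequence of how $J_\lambda$, $A_\lambda$, $\log_z$ and $\norm{\cdot}_z$ were set up. The only points requiring a little care are the degenerate case $x=J_\lambda x$ and the bookkeeping of the cone conventions, namely scalar multiplication $\lambda(t\gamma)=(\lambda t)\gamma$ and the definition of the norm $\norm{\cdot}_z$ via the cone metric $d_z$.
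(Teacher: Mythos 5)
Your proposal is correct and follows essentially the same route as the paper's own proof, which likewise treats both items as immediate consequences of the definitions of $J_\lambda$, $A_\lambda$, $\log$ and $\norm{\cdot}_z$. The only difference is that you spell out the degenerate case $x=J_\lambda x$ and the cone-scaling convention explicitly, which the paper leaves implicit.
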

\begin{proof}
The first item is immediate by definition of $J_\lambda$ as if $x\in\mathrm{dom}(J_\lambda)$, then $A_\lambda x=\tfrac{1}{\lambda}\log_{J_\lambda x} x\in A(J_\lambda x)$ since $J_\lambda$ is single-valued. The second item is immediate from the definition of $\norm{\cdot}$ and $\log$, by which we have $\norm{\tfrac{1}{\lambda}\log_{J_\lambda x} x}_{J_\lambda x}=\tfrac{1}{\lambda}d(x,J_\lambda x)$.
\end{proof}

On the contrary, the definition given in \cite{ChaipunyaKohsakaKumam2021} does not seem to satisfy the above essential inclusion given in item (1). The mapping will however only serve a technical and in a way auxiliary purpose here and is not our main object of study so that no compatibility issues between the present work and \cite{ChaipunyaKohsakaKumam2021} arise.

\subsection{Measurability and integration on $\CAT$ spaces}\label{sec:intCAT}

We now introduce the necessary background from the theory of random variables with values in Hadamard spaces. This theory goes back to the seminal work of Sturm and covers a range of advanced areas such as martingales and Markov processes. We here only rely on Sturm's relatively early works \cite{Sturm2002,Sturm2003} for the development of the $L^1$- and $L^2$-theory of these random variables. We also refer to the exposition of these matters given by Ba\v{c}\'ak \cite{Bacak2014a}. For that, let $(T,\mathcal{T},\tau)$ be a probability space and $(X,d)$ be a separable Hadamard space. An ($X$-valued) random variable is a map $x:T\to X$ which is $\mathcal{T}$/$\mathcal{B}(X)$-measurable, where $\mathcal{B}(X)$ is the Borel $\sigma$-algebra of $X$. Write $\mathcal{P}(X)$ for the set of all probability measures on $\mathcal{B}(X)$ and for $p\in [1,\infty)$, write $\mathcal{P}^p(X)$ for the set of all measures $P\in\mathcal{P}(X)$ such that $\int d^p(w,z)\,dP(w)<\infty$ for some/any $z\in X$. As usual, the push-forward measure $\tau_{x}$, given by $\tau_{x}(A):= \tau(x^{-1}(A))$ for $A\in \mathcal{B}(X)$, is called the distribution of $x$. Naturally, we have $\tau_x\in\mathcal{P}(X)$.

Fundamentally (compare e.g.\ \cite[Theorem 2.3.1]{Bacak2014a}), we have the following result in Hadamard spaces: For any $P\in\mathcal{P}^1(X)$ and some/any $y\in X$, there is a unique minimizer 
\[
b(P):=\argmin_{z\in X}\int\left(d^2(z,w)-d^2(w,y)\right)\,dP(w),
\]
which is independent of $y$, called the barycenter of $P$. If $P\in\mathcal{P}^2(X)$, we further have
\[
b(P)=\argmin_{z\in X}\int d^2(w,z)\,dP(w).
\]

Given $p\in [1,\infty)$, the space $\mathcal{L}^p(T,X,\tau)$ is the space of all $\mathcal{T}$/$\mathcal{B}(X)$-measurable maps $x:T\to X$ with $d_p(x,z)<\infty$ for some/any $z\in X$, where $d^p_p(x,y):= \int d^p(x,y)\,d\tau$ for $\mathcal{T}$/$\mathcal{B}(X)$-measurable maps $x,y:T\to X$ and where we identify $z\in X$ with a constant map $T\to X$. The space $L^p(T,X,\tau)$ arises from this space by considering equivalence classes under the equivalence relation defined by $d_p(x,y)=0$. Note that $x\in L^p(T,X,\tau)$ if, and only if, $\tau_x\in\mathcal{P}^p(X)$.

The expectation of a random variable $x\in L^1(T,X,\tau)$ is now defined via 
\[
\EE[x]:=\int x\,d\tau:=b(\tau_x)
\]
where $b(\tau_x)$ is the barycenter of $\tau_x$ as before.

We can similarly define the conditional expectation of a random variable $x:T\to X$ relative to a sub-$\sigma$-algebra $\mathcal{T}_0\subseteq\mathcal{T}$. Concretely, as shown in \cite{Sturm2002}, for any $x\in L^2(T,X,\tau)$ there is a unique equivalence class of $\mathcal{T}_0$/$\mathcal{B}(X)$-measurable random variables $z\in L^2(T,X,\tau)$ which minimizes $d_2(z,x)$. We denote this equivalence class by $\EE[x\,\vert\, \mathcal{T}_0]$. Note that for $\mathcal{T}_0=\{\emptyset,T\}$, this notion reduces to the previously defined expectation. This $L^2$-theory of conditional expectations then continuously extends to the $L^1$-case (see Corollary 2.4 in \cite{Sturm2002}).

Indeed, we require only relatively little theory of the above objects beyond their definitions. The first result is the following transformation theorem (which crucially employs separability):
\begin{lemma}[{see \cite[p.\ 371]{Sturm2003}}]\label{transfer}
Let $(T',\mathcal{T}')$ be another measure space. For any $\mathcal{T}$/$\mathcal{T}'$-measurable $\zeta:T\to T'$ and any $\mathcal{T}'$/$\mathcal{B}(X)$-measurable $x:T'\to X$ such that $x\circ \zeta$ and $x$ are integrable, it holds that
\[
\int x\circ\zeta\,d\tau=\int x\;d\tau_\zeta.
\]
\end{lemma}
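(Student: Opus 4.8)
The plan is to exploit that, by definition, the expectation of an integrable $X$-valued random variable is the barycenter of its distribution, so that $\EE[\,\cdot\,]$ factors entirely through the push-forward measure it induces on $X$. Since the barycenter map $b$ takes a single probability measure in $\mathcal{P}^1(X)$ as its input, the whole statement reduces to the purely measure-theoretic claim that $x\circ\zeta$ and $x$ induce the \emph{same} distribution on $X$ --- one with respect to $\tau$, the other with respect to $\tau_\zeta$. This reduction is not merely convenient but essentially forced: the barycenter is nonlinear, so the usual ``standard machine'' proof of the classical transformation theorem (indicators, simple functions, monotone limits) is unavailable here, and passing through distributions is what makes the argument go through.

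First I would record the measurability bookkeeping: as $\zeta$ is $\mathcal{T}$/$\mathcal{T}'$-measurable and $x$ is $\mathcal{T}'$/$\mathcal{B}(X)$-measurable, the composite $x\circ\zeta$ is $\mathcal{T}$/$\mathcal{B}(X)$-measurable, so that its distribution $\tau_{x\circ\zeta}$ is well defined, and for any $A\in\mathcal{B}(X)$ the set $x^{-1}(A)$ lies in $\mathcal{T}'$, so that $\tau_\zeta(x^{-1}(A))$ makes sense. Then I would verify the distributional identity directly: for every $A\in\mathcal{B}(X)$,
\[
\tau_{x\circ\zeta}(A)=\tau\big((x\circ\zeta)^{-1}(A)\big)=\tau\big(\zeta^{-1}(x^{-1}(A))\big)=\tau_\zeta\big(x^{-1}(A)\big)=(\tau_\zeta)_x(A),
\]
where the second equality is just $(x\circ\zeta)^{-1}=\zeta^{-1}\circ x^{-1}$ and the third is the definition of the push-forward $\tau_\zeta$. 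Hence $\tau_{x\circ\zeta}=(\tau_\zeta)_x$ as Borel probability measures on $X$.

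To finish, I would observe that the two integrability hypotheses guarantee $\tau_{x\circ\zeta}\in\mathcal{P}^1(X)$ and $(\tau_\zeta)_x\in\mathcal{P}^1(X)$ (indeed, by the identity just established these two conditions are equivalent), so both barycenters exist. Applying the definition $\EE[\,\cdot\,]=b(\,\cdot\,)$ of the expectation together with the fact that $b$ depends only on the underlying measure, we obtain
\[
\int x\circ\zeta\,d\tau=b\big(\tau_{x\circ\zeta}\big)=b\big((\tau_\zeta)_x\big)=\int x\,d\tau_\zeta,
\]
which is the claim.

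I do not expect a genuine obstacle: the content is conceptual rather than computational and lies in arranging the definitions so that everything factors through the induced distribution. The one point deserving care --- and the reason separability is flagged as crucial --- is that the very existence and uniqueness of the barycenter $b(P)$ for $P\in\mathcal{P}^1(X)$, on which the definition of $\EE$ rests, is available only for separable Hadamard spaces; once that framework is granted, the transformation theorem is a formal consequence of the push-forward identity above and invokes no further geometry of $X$.
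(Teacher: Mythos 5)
Your proof is correct and is the natural argument here: the paper itself states this lemma without proof, citing it from Sturm, and the intended justification is exactly what you give --- the expectation factors through the distribution via the barycenter, and the push-forward identity $\tau_{x\circ\zeta}=(\tau_\zeta)_x$ does the rest. Your observation that the two integrability hypotheses are equivalent once the distributional identity is in hand is also accurate.
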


The second result that we mention is a version of Jensen's inequality, formulated even for conditional expectations:
\begin{lemma}[{see \cite[Proposition 3.4]{Sturm2002}}]\label{nonlinJensen}
Let $\mathcal{T}_0\subseteq\mathcal{T}$ be a sub-$\sigma$-algebra. Further, let $x\in L^1(T,X,\tau)$ and let $\varphi:X\to\mathbb{R}$ be lower-semicontinuous and convex with $(\varphi\circ x)_+\in L^1(T,\tau)$. Then
\[
\EE[\varphi\circ x\mid\mathcal{T}_0]\geq \varphi\left(\EE[x\mid\mathcal{T}_0]\right).
\]
\end{lemma}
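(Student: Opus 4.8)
The plan is to split the statement into an unconditional, barycentric Jensen inequality $\int\varphi\,dP\geq\varphi(b(P))$ for $P\in\mathcal P^1(X)$ and then to lift this to the conditional setting fibrewise by means of a regular conditional distribution, which is available since a separable Hadamard space is Polish. The unconditional inequality carries the genuine analytic content; the conditional version should then follow by a disintegration argument together with the uniqueness characterizations of the barycenter $b(\cdot)$ and of $\EE[\cdot\mid\mathcal T_0]$.

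For the unconditional inequality I would deliberately avoid trying to use a subgradient of $\varphi$ at $b(P)$: the subdifferential of Example \ref{subDiffEx} lives in the forward cone $T_{b(P)}X$, and first-order optimality of the barycenter only yields $\int g_{b(P)}(u,\log_{b(P)}w)\,dP(w)\leq 0$ for $u\in T_{b(P)}X$, which is the wrong sign to combine with the subgradient inequality (in the Hilbert case one recovers equality because the tangent cone is a linear space, but this symmetry fails in general). Instead I would use an inductive-mean approximation of the barycenter. Sampling $Y_1,Y_2,\dots$ i.i.d.\ with law $P$ and setting $S_1:=Y_1$ and $S_k:=\gamma_{S_{k-1},Y_k}\!\big(d(S_{k-1},Y_k)/k\big)$, Sturm's law of large numbers gives $S_k\to b(P)$ almost surely. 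Geodesic convexity of $\varphi$ along $\gamma_{S_{k-1},Y_k}$ yields the recursion
\[
\varphi(S_k)\leq\Big(1-\tfrac1k\Big)\varphi(S_{k-1})+\tfrac1k\varphi(Y_k),
\]
which telescopes to $\varphi(S_n)\leq\frac1n\sum_{k=1}^n\varphi(Y_k)$. Passing to the limit, the right-hand side converges to $\int\varphi\,dP$ by the classical scalar law of large numbers, while lower semicontinuity of $\varphi$ gives $\varphi(b(P))\leq\liminf_n\varphi(S_n)$, so that $\varphi(b(P))\leq\int\varphi\,dP$. The hypothesis $(\varphi\circ x)_+\in L^1$ is precisely what is needed to make the mean on the right well defined in $(-\infty,+\infty]$ and to run this scalar limit.

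For the conditional statement I would fix a regular conditional distribution $\omega\mapsto\nu_\omega\in\mathcal P(X)$ of $x$ given $\mathcal T_0$, so that the scalar identity $\EE[\varphi\circ x\mid\mathcal T_0](\omega)=\int\varphi\,d\nu_\omega$ holds almost surely. The key intermediate step is the identification $\EE[x\mid\mathcal T_0](\omega)=b(\nu_\omega)$ almost surely: one checks that $\omega\mapsto b(\nu_\omega)$ is $\mathcal T_0$-measurable and verifies, via the variational (variance) characterization of both objects, that it is the unique minimizer defining the nonlinear conditional expectation. Granting this, applying the unconditional inequality fibrewise to $\nu_\omega$ gives
\[
\EE[\varphi\circ x\mid\mathcal T_0](\omega)=\int\varphi\,d\nu_\omega\geq\varphi\big(b(\nu_\omega)\big)=\varphi\big(\EE[x\mid\mathcal T_0](\omega)\big)
\]
for almost every $\omega$, which is the claim. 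I expect the main obstacle to be exactly this identification of the nonlinear conditional expectation with the fibrewise barycenter of the regular conditional distribution, since it must be reconciled with Sturm's $L^2$-variational definition of $\EE[\cdot\mid\mathcal T_0]$ and then transported to the $L^1$-setting; secondary care is needed for the measurability of $\omega\mapsto b(\nu_\omega)$ and for the integrability bookkeeping forced by $\varphi$ being controlled only through its positive part.
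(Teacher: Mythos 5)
First, a framing point: the paper does not prove this lemma at all --- it is imported verbatim as Proposition 3.4 of \cite{Sturm2002} and used as a black box, so there is no in-paper argument to compare yours against. Measured against the literature, your proposal is essentially a faithful reconstruction of Sturm's own route. The unconditional inequality $\varphi(b(P))\leq\int\varphi\,dP$ via inductive means --- the telescoped geodesic-convexity recursion, the scalar law of large numbers, and lower semicontinuity at the limit --- is precisely Sturm's proof of the unconditional Jensen inequality in \cite{Sturm2003}, and your observation that a subgradient argument is unusable here because first-order optimality of the barycenter yields an inequality of the wrong sign (the tangent cone not being a linear space) is exactly the right reason to avoid it. Two points deserve more care than your sketch gives them. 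First, Sturm's law of large numbers gives a.s.\ convergence of the inductive means cleanly for $P\in\mathcal{P}^2(X)$; at the $\mathcal{P}^1$ level you should either invoke the corresponding statement from \cite{Sturm2003} explicitly or settle for convergence in probability and pass to an a.s.\ convergent subsequence, which is all the lower-semicontinuity step requires (the scalar SLLN along that subsequence still identifies the limit of the averages). Second, the identification $\EE[x\mid\mathcal{T}_0](\omega)=b(\nu_\omega)$ is, as you correctly flag, the real content of the conditional lift: for $x\in L^2$ it follows by disintegrating $\int d^2(z,x)\,d\tau$ over $\nu_\omega$ and minimizing fibrewise, with measurability of $\omega\mapsto b(\nu_\omega)$ supplied by the Wasserstein--Lipschitz continuity of the barycenter map (Sturm's fundamental contraction property), and it is transported to $L^1$ using the nonexpansiveness of both $b(\cdot)$ and $\EE[\cdot\mid\mathcal{T}_0]$ with respect to the $1$-Wasserstein and $d_1$ metrics, since Sturm defines the $L^1$ conditional expectation by continuous extension from $L^2$. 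With those two points filled in, your argument is complete and yields exactly the stated conditional inequality, including the $[-\infty,\infty)$-valued bookkeeping forced by assuming only $(\varphi\circ x)_+\in L^1$.
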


The last result is the following independence property for conditional expectations, which lifts a well-known result from real-valued conditional expectations (see e.g.\ \cite[Theorem 8.14]{Klenke2020}) to the case of Hadamard spaces:

\begin{lemma}\label{indepLem}
Let $\mathcal{T}_0\subseteq\mathcal{T}$ be a sub-$\sigma$-algebra and let $x\in L^1(T,X,\tau)$ be independent of $\mathcal{T}_0$, that is its generated $\sigma$-algebra $\sigma(x)$ is independent of $\mathcal{T}_0$ in the usual sense (see e.g.\ \cite{Klenke2020}). Then
\[
\EE[x\mid\mathcal{T}_0]=\int x\,d\tau.
\]
\end{lemma}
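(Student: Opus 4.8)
The plan is to show that the deterministic constant random variable $z_0 \equiv \int x\,d\tau = b(\tau_x)$ is precisely the $\mathcal{T}_0$-measurable $L^2$-minimizer of $d_2(\cdot,x)$; uniqueness of that minimizer (which, per Sturm's construction recalled above, \emph{is} the definition of $\EE[x\mid\mathcal{T}_0]$) then forces $\EE[x\mid\mathcal{T}_0]=z_0$, which is the claim. Being constant, $z_0$ is trivially $\mathcal{T}_0$-measurable, and it lies in $L^2(T,X,\tau)$ because $x\in L^2$ gives $\tau_x\in\mathcal{P}^2(X)$, whence $\int d^2(z_0,w)\,d\tau_x(w)<\infty$. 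Thus everything reduces to proving $d_2(z_0,x)\leq d_2(z,x)$ for every $\mathcal{T}_0$-measurable $z\in L^2(T,X,\tau)$.

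First I would introduce the real-valued function $F(c):=\int_T d^2(c,x(t))\,d\tau(t)=\int_X d^2(c,w)\,d\tau_x(w)$ for $c\in X$, which is finite since $\tau_x\in\mathcal{P}^2(X)$. The $L^2$-characterization of the barycenter recalled above states exactly that $b(\tau_x)=\mathrm{argmin}_{c\in X}F(c)$, so $F(c)\geq F(z_0)$ for all $c\in X$, where $F(z_0)=d_2(z_0,x)^2$.

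Next I would use independence to factor $d_2(z,x)^2$. Fix a $\mathcal{T}_0$-measurable representative $z$; then $\sigma(z)\subseteq\mathcal{T}_0$, and since $\sigma(x)$ is independent of $\mathcal{T}_0$ the $X$-valued random variables $x$ and $z$ are independent, so the law of $\zeta:=(z,x):T\to X\times X$ is the product $\tau_\zeta=\tau_z\otimes\tau_x$. Separability of $X$ guarantees that the Borel $\sigma$-algebra of $X\times X$ agrees with the product $\sigma$-algebra, so the continuous map $(u,w)\mapsto d^2(u,w)$ is jointly Borel measurable, and the transformation theorem (the real-valued analogue of Lemma \ref{transfer}) together with Tonelli's theorem yields
\[
d_2(z,x)^2=\int_T d^2(z(t),x(t))\,d\tau(t)=\int_{X\times X}d^2(u,w)\,d(\tau_z\otimes\tau_x)(u,w)=\int_X F(u)\,d\tau_z(u).
\]
Finally, since $F(u)\geq F(z_0)$ pointwise and $\tau_z$ is a probability measure, $\int_X F(u)\,d\tau_z(u)\geq F(z_0)=d_2(z_0,x)^2$, giving $d_2(z,x)\geq d_2(z_0,x)$ as required.

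The main obstacle is the measure-theoretic factorization in the displayed step: one must carefully pass from independence of the $\sigma$-algebras $\sigma(x)$ and $\mathcal{T}_0$ to independence of the $X$-valued random variables $x$ and $z$, identify the joint law as the product measure $\tau_z\otimes\tau_x$, and justify applying Tonelli to $d^2$ on the product space. It is exactly here that separability of $X$ is indispensable, both to render $d^2$ jointly measurable and to ensure that the product of the Borel $\sigma$-algebras equals the Borel $\sigma$-algebra of $X\times X$; the remaining comparison against $\min F$ is then immediate.
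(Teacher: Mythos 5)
Your proposal is correct and follows essentially the same route as the paper's own proof: both reduce the claim to showing that the constant map $\int x\,d\tau$ minimizes $d_2(\cdot,x)$ among $\mathcal{T}_0$-measurable elements of $L^2(T,X,\tau)$, use independence to rewrite $d_2(z,x)^2$ as a double (product-measure) integral, and then apply the $L^2$-barycenter's minimizing property pointwise in the outer variable. Your write-up merely makes the measure-theoretic details (product law, Tonelli, separability for joint measurability of $d^2$) more explicit than the paper does.
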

\begin{proof}
We first show the claim for $x\in L^2(T,X,\tau)$. Note that $\int x\,d\tau$ as a constant map is clearly in $L^2(T,X,\tau)$ and $\mathcal{T}_0/\mathcal{B}(X)$-measurable. To see $\EE[x\mid\mathcal{T}_0]=\int x\,d\tau$ and as $x\in L^2(T,X,\tau)$, it thus remains to see that $\int x\,d\tau$ minimizes $d_2(z,x)$ amongst all $\mathcal{T}_0/\mathcal{B}(X)$-measurable $z\in L^2(T,X,\tau)$. For that, simply note that
\[
\int d^2(z,x)\,d\tau=\int\int d^2(z(s'),x(s))\,d\tau(s)\,d\tau(s')\geq \int d^2\left(\int x\,d\tau ,x(s)\right)\,d\tau(s),
\]
where the equality follows as $z$ is $\mathcal{T}_0/\mathcal{B}(X)$-measurable and hence independent of $x$, and the inequality follows from the fact that $\int x\,d\tau=\argmin_{w\in X}\int d^2(x,w)\,d\tau$ as $x\in L^2(T,X,\tau)$. Now, given $x\in L^1(T,X,\tau)$, fix a $z\in X$ and, given $n\in\mathbb{N}$, define $x_n$ following \cite{Sturm2002} via $x_n(s):=x(s)$ if $d(x(s),z)\leq n$ and $x_n(s):=z$ otherwise. Then $x_n\in L^2(T,X,\tau)$ and $\int d(x_n,x)\,d\tau\to 0$ as $n\to\infty$, that is in particular $d(\int x_n\,d\tau,\int x\,d\tau)\leq \int d(x_n,x)\,d\tau\to 0$ by \cite[Corollary 2.4]{Sturm2002} and so $\int x_n\,d\tau\to \int x\,d\tau$. Moreover, as $x$ is independent of $\mathcal{T}_0$, so is each $x_n$. Hence $\EE[x_n\mid\mathcal{T}_0]=\int x_n\,d\tau$ for all $n\in\mathbb{N}$. By definition, we have
\[
\EE[x\mid \mathcal{T}_0]=\lim_{n\to\infty}\EE[x_n\mid\mathcal{T}_0]=\lim_{n\to\infty}\int x_n\,d\tau=\int x\,d\tau,
\]
as claimed.
\end{proof}

As we will later be concerned with these probabilistic notions for random variables taking values not only in a Hadamard space $X$ but also in its tangent spaces $T_xX$, we comment on some measurability aspects and some crucial assumptions related to this already in the following remark:

\begin{remark}\label{rem:tangentSepAndMeas}
As seen above, we crucially rely on the separability and completeness of the underlying geodesic space to develop probability theory over it. As a consequence, we will later throughout use that each tangent space $T_xX$ is separable, given an underlying separable Hadamard space $X$. Indeed, this can be shown as follows: First note that $\Sigma'_xX$ is separable. For that, given a countable and dense $Z\subseteq X$, consider the set 
\[
\gamma_{x,Z}:=\{\gamma_{x,z}\mid z\in Z\text{ and }x\neq z\}\subseteq \Sigma'_xX.
\]
Let now $\gamma$ be a nonconstant geodesic issuing from $x$ with endpoint $y$, that is $\gamma=\gamma_{x,y}$, and take $(z_n)\subseteq Z$ such that $z_n\to y$. Then $\angle_x(\gamma_{x,y},\gamma_{x,z_n})\leq\bar{\angle}_x(y,z_n)\to 0$, where the inequality follows from \cite[Proposition II.1.7]{BridsonHaefliger1999} and the limit follows by the law of cosines applied to $\bar{\angle}_x$ (see \cite[p.\ 9]{BridsonHaefliger1999}). As $\Sigma'_xX$ is thereby separable, so is its completion $\Sigma_xX$. The Euclidean cone over $\Sigma_xX$, that is $T_xX$, is then immediately separable as well.

In any case, if $T_xX$ is separable, then $d_x$ is jointly measurable and so $\norm{\cdot}_x$, $g_x$ and $\ql{xy}{uv}_x$ are (jointly) measurable as well. Further, note that $\log_x$ is measurable as it is nonexpansive and hence uniformly continuous.
\end{remark}

As usual, we say that a sequence $(x_n)$ of ($X$-valued) random variables is independent if the generated $\sigma$-algebras $\sigma(x_n)$ are independent and $(x_n)$ is called identically distributed if all distributions $\tau_{x_n}$ coincide. We abbreviate the property that a sequence is independent and identically distributed by i.i.d., also as usual.

In a separable Hilbert space $(X,\langle\cdot,\cdot\rangle)$, it is well-known that independent ($X$-valued) random variables $u,v:T\to X$ satisfy 
\[
\int\langle u,v\rangle\,d\tau=\left\langle \int u\,d\tau,\int v\,d\tau\right\rangle,
\]
where the integral in that case reduces to the Bochner integral on $X$. We will later require this property for random variables taking values in the tangent space $T_xX$ of a Hadamard space $X$, relative to the mapping $g_x$ considered above. While this therefore naturally holds for Hilbert spaces and Hadamard manifolds, it is not clear if this property holds in this full generality. Indeed, the only partial answer we can give here is that the equality always holds when $T_xX$ has flat curvature, i.e.\ in particular when $X$ is a Hilbert-Hadamard space. Concretely, utilizing that $g_x$ is affine in this case (recall Lemma \ref{affine}), we get the following result:

\begin{lemma}\label{indepMetric}
Fix $x\in X$ and suppose $T_xX$ is flat. Let $\mathcal{T}_0\subseteq\mathcal{T}$ be a sub-$\sigma$-algebra. If $u,v\in L^1(T,T_xX,\tau)$ are such that $v$ is $\mathcal{T}_0$/$\mathcal{B}(T_xX)$-measurable and $u$ is independent of $\mathcal{T}_0$, then we have
\[
\EE[g_x(u,v)\mid \mathcal{T}_0]= g_x\left(\EE[u\mid \mathcal{T}_0],\EE[v\mid \mathcal{T}_0]\right).
\]
\end{lemma}
\begin{proof}
Note that as $u,v$ are integrable and independent, also $g_x(u,v)$ is integrable as we have 
\[
\int \vert g_x(u,v)\vert \,d\tau\leq \int \norm{u}_x\norm{v}_x \,d\tau= \left(\int \norm{u}_x \,d\tau\right)\left(\int \norm{v}_x\,d\tau\right)<\infty,
\]
where the equality follows by independence. We now have 
\[
\EE[g_x(u,v)\mid \mathcal{T}_0](s)=\int g_x(u,v(s))\,d\tau=g_x\left(\int u\,d\tau,v(s)\right)
\]
for almost all $s\in T$, where the first equality follows by independence of $u$ from $\mathcal{T}_0$ (and hence from $v$) and the second follows from Jensen's inequality (recall Lemma \ref{nonlinJensen}), using that $g_x$ is affine (recall Lemma \ref{affine}). Using properties of the conditional expectation for real-valued random variables, we hence have
\[
\EE[g_x(u,v)\mid \mathcal{T}_0]=\EE\left[g_x\left(\int u\, d\tau,v\right)\mid\mathcal{T}_0\right]=g_x\left(\int u\, d\tau,\EE[v\mid\mathcal{T}_0]\right),
\]
where the last equality again follows from Jensen's inequality, using that $g_x$ is affine. This implies
\[
\EE[g_x(u,v)\mid \mathcal{T}_0]=g_x\left(\int u\, d\tau,\EE[v\mid\mathcal{T}_0]\right)=g_x(\EE[u\mid\mathcal{T}_0],\EE[v\mid\mathcal{T}_0])
\]
via Lemma \ref{indepLem}, using that $u$ is independent of $\mathcal{T}_0$.
\end{proof}

As this issue is thereby rather delicate in this metric context, we will always explicitly highlight whenever we assume flatness of the tangent spaces.

We now transfer the above notion of an integral to set-valued maps in an analogous way as the seminal work of Aumann \cite{Aumann1965} did for random variables taking values in (separable) Hilbert spaces, replacing the use of the Bochner integral therein with the integral of Sturm.

Concretely, given a set-valued map $F:T\to 2^X$, a function $\phi:T\to X$ is a measurable selection of $F$ if it is $\mathcal{T}$/$\mathcal{B}(X)$-measurable and $\phi(s)\in F(s)$ for all $s\in T$. The set of all measurable selections of $F$ is denoted by $\mathcal{S}(F)$ and we write $\mathcal{S}^p(F):=\mathcal{S}(F)\cap L^p(T,X,\tau)$ where $L^p(T,X,\tau)$ is the $L^p$-space defined as above via Sturm's integral. The Aumann-Sturm integral of $F$ is then defined as
\[
\int F\,d\tau=\left\{\int \phi\,d\tau\mid \phi\in\mathcal{S}^1(F)\right\}.
\]

\section{Random monotone vector fields on nonlinear spaces}\label{sec:randOp}

We now bring the previous preliminaries together to define stochastically perturbed monotone vector fields on $\CAT$ spaces. For that, let $(T,\mathcal{T},\tau)$ be a probability space and $X$ be a separable Hadamard space (recall also Remark \ref{rem:tangentSepAndMeas}).

Consider a set-valued map $A:T\times X\to 2^{TX}$ with $A(s,x)\subseteq T_xX$ for all $s\in T$ and $x\in X$ such that $A(s,\cdot)$ is monotone for any $s\in T$.

For such a map, we define the resolvent similar to before via 
\[
J_\lambda(s,x):=\{z\in X\mid \tfrac{1}{\lambda}\log_zx\in A(s,z)\}
\]
for $\lambda>0$, $s\in T$ and $x\in X$. As before, if $A(s,\cdot)$ is monotone, then such a $z$ is necessarily unique if it exists and we denote it by $J_\lambda(s,x)$, similar to before.

The Yosida approximate is then lifted to this setting via
\[
A_\lambda(s,x):=\tfrac{1}{\lambda}\log_{J_\lambda(s,x)} x,
\]
so that Lemma \ref{YosidaProp} yields $A_\lambda(s,x)\in A(s,J_\lambda(s,x))$ as well as $\norm{A_\lambda(s,x)}_{J_\lambda(s,x)}=\frac{1}{\lambda}d(x,J_\lambda(s,x))$ for all $\lambda>0$, $s\in T$ and $x\in\mathrm{dom}(J_\lambda(s,\cdot))$.

There are various possible measurability properties for such maps which can be imposed. In the context of a Hilbert space with an operator $A:T\times X\to 2^X$, the most direct is perhaps to assume that $A$ satisfies
\begin{equation}
\{(s,x)\in T\times X\mid A(s,x)\cap U\neq\emptyset\}\in\mathcal{T}\otimes\mathcal{B}(X)\tag{\%}\label{meas}
\end{equation}
for any open set $U\subseteq X$, a property that is often called Effros measurability. We refer to \cite{AubinFrankowska2009,CastaingValadier1977} for further discussions on measurable set-valued maps. In Hilbert spaces and when $A$ is maximally monotone, as also outlined in \cite{Bianchi2016}, it follows by \cite[Lemma 2.1]{Attouch1979} that \eqref{meas} is equivalent to the property that $J_\lambda(\cdot,x)$ is $\mathcal{T}$/$\mathcal{B}(X)$-measurable for any $x\in X$ and some (or any) $\lambda>0$.

However, this equivalence does not seem to readily transfer to this metric setting. In particular, it is already not completely clear how to adequately transfer the above measurability assumption to the nonlinear context, and if phrased as satisfying \eqref{meas} for any open set $U\subseteq TX$, then the question for a suitable topology on $TX$ remains, where for any immediate such choice, neither direction of the above equivalence seems to hold.

As it will be critical for us to guarantee the measurability of the resolvent, we will hence focus on this latter property (similar to \cite{Bianchi2016}). So, we arrive at the following official definition:

\begin{definition}\label{def:randMonVecField}
A map $A:T\times X\to 2^{TX}$ with $A(s,x)\subseteq T_xX$ is called a random monotone vector field if $A(s,\cdot)$ is monotone for any $s\in T$ and $J_\lambda(\cdot,x)$ is $\mathcal{T}$/$\mathcal{B}(X)$-measurable for any $x\in X$ and any $\lambda>0$.
\end{definition}

Note that this property fully suffices for our purposes here. In particular, we will make no further measurability assumptions on such set-valued maps $A$ in the following.

As will be discussed in Examples \ref{randomSubDiffEx} and \ref{randomSaddleEx} below, the above condition can be immediately verified for the fundamental examples of subdifferentials of convex functions as well as saddle fields.

In that context of a random monotone vector field $A$, now assume that $A(s,\cdot)$ also satisfies the surjectivity condition for all $s\in T$, i.e.\ for any $\lambda>0$ and $x\in X$, there exists a $z\in X$ with $\tfrac{1}{\lambda}\log_zx\in A(s,z)$. Then, note that $J_\lambda(s,\cdot)$ is single-valued, total and nonexpansive as discussed before and therefore, $J_\lambda(s,\cdot)$ is uniformly continuous for any $s\in T$. So, $J_\lambda$ is a Carath\'eodory map\footnote{A function $g:T\times X\to X$ is called a Carath\'eodory map if $g(\cdot,x)$ is measurable for all $x\in X$ and $g(s,\cdot)$ is continuous for all $s\in T$.} whenever $A$ is a random monotone vector field with the surjectivity condition. In particular, we then have that $J_\lambda$ is $\mathcal{T}\otimes\mathcal{B}(X)$/$\mathcal{B}(X)$-measurable in that case (see e.g.\ \cite[Lemma 8.2.6]{AubinFrankowska2009}).

For such a random monotone vector field $A$, we define its mean $\underline{A}$ via
\[
\underline{A}(x):=\int A(s,x)\;d\tau(s)
\]
where the integral refers to the Aumann-Sturm integral as defined before, now on $T_xX$. If $X$ is a Hilbert-Hadamard space, then $\underline{A}$ is monotone, which follows from Jensen's inequality (recall Lemma \ref{nonlinJensen}) and that the pseudo-Riemannian metric is affine in this setting (recall Lemma \ref{affine}). Further, we introduce the notation $S_A(x):=\mathcal{S}(A(\cdot,x))$ and $S^p_A(x):=\mathcal{S}^p(A(\cdot,x))$ for $p\geq 1$, and write
\[
\mathcal{Z}_A(p):=\left\{x\in X\mid \exists \phi\in S^p_A(x)\left(\int\phi\,d\tau=0\right)\right\}.
\]
It should be noted that $\mathcal{Z}_A(p)\subseteq\mathcal{Z}_A(1)=\zer \underline{A}$.

Later on, we will in particular assume that $A(s,\cdot)$ is strongly monotone with a modulus $\alpha(s)$, where $\alpha:T\to (0,\infty)$ forms a measurable and integrable function such that $\int\alpha\,d\tau>0$. Similar to above, this implies that $\underline{A}$ is strongly monotone with modulus $\underline{\alpha}:=\int\alpha\;d\tau>0$ if $X$ is a Hilbert-Hadamard space.

\begin{example}\label{randomSubDiffEx}
Let $(T,\mathcal{T},\tau)$ be a complete probability space. In analogy to \cite{RockafellarWets1998}, let $f:T\times X\to (-\infty,+\infty]$ be a normal convex integrand, i.e.\ $f(s,\cdot)$ is proper, lower-semicontinuous for all $s\in T$ and $f$ is $\mathcal{T}\otimes\mathcal{B}(X)$-measurable. Then for its associated subdifferential
\[
\partial f(s,x):=\{u\in T_xX\mid f(s,y)\geq f(s,x)+g_x(u,\log_x y)\text{ for all }y\in X\}
\]
as in Example \ref{subDiffEx}, its resolvents are given, following Example \ref{resEx}, by the proximal maps
\[
\mathrm{prox}_{\lambda}^f(s,x):=\argmin_{y\in X}\left\{ f(s,y)+\frac{1}{2\lambda}d^2(x,y)\right\}.
\]
By adapting arguments from \cite{RockafellarWets1998}, it is then easy to see that $\mathrm{prox}_{\lambda}^f(\cdot,x)$ is $\mathcal{T}$/$\mathcal{B}(X)$-measurable, so that $\partial f$ is a random monotone vector field. Now, define $F(x):=\int f(s,x)\,d\tau(s)$ and assume that $F$ is proper. It further follows immediately that $F$ is convex and lower-semicontinuous. If we now in analogy to \cite{Bianchi2016} assume that
\[
\underline{\partial f}(x)=\int\partial f(s,x)\,d\tau(s)=\partial\int f(s,x)\,d\tau(s)=\partial F(x),
\]
then we in particular have $\zer \underline{\partial f}=\zer \partial F=\argmin F$ by Example \ref{subDiffEx}. Already over Hilbert or even Euclidean spaces, the above interchange of the integral and the subdifferential is a non-trivial property in stochastic convex analysis (sufficient conditions and related discussions can be found in e.g.\ \cite{RockafellarWets1982}). We do not know if and how the full property extends to Hadamard manifolds or spaces. The direction $\underline{\partial f}(x)\subseteq \partial F(x)$ however remains true in Hilbert-Hadamard spaces: Take $\phi\in S^1(\partial f(\cdot,x))$, that is $f(s,y)\geq f(s,x)+g_x(\phi(s),\log_x y)$ for any $s\in T$ and $y\in X$. Integrating the above property and using that $g_x$ is affine (recall Lemma \ref{affine}) together with Jensen's inequality (recall Lemma \ref{nonlinJensen}) yields $F(y)\geq F(x)+g_x\left(\int\phi\,d\tau,\log_x y\right)$ for any $y\in X$, and so $\int\phi\,d\tau\in \partial F(x)$. In particular, zeros of $\underline{\partial f}$ are always minimizers of $F$. Lastly, note that if it is further assumed that $f(s,\cdot)$ is strongly convex with modulus $\alpha(s)>0$ such that $\alpha$ is integrable with $\underline{\alpha}:=\int\alpha\,d\tau>0$, then $F$ is strongly convex with modulus $\underline{\alpha}$. Further, by Proposition \ref{strConToStrMon}, $\partial f(s,\cdot)$ is strongly monotone with modulus $\alpha(s)$ and $\partial F$ is strongly monotone with modulus $\underline{\alpha}$. So, over a Hilbert-Hadamard space or if $\underline{\partial f}=\partial F$, also $\underline{\partial f}$ is strongly monotone with modulus $\underline{\alpha}$.
\end{example}

\begin{example}\label{randomSaddleEx}
Let $(T,\mathcal{T},\tau)$ be a complete probability space and let $l:T\times X\times Y\to\mathbb{R}$ be a stochastic saddle function over two Hadamard spaces $X$ and $Y$, that is $l(s,\cdot,\cdot)$ is a saddle function for each $s\in T$ and $l$ is $\mathcal{T}\otimes\mathcal{B}(X)\otimes\mathcal{B}(Y)$-measurable. Then for its associated saddle field
\[
A_l(s,x,y):=\partial [-l(s,\cdot,y)](x)\times \partial [l(s,x,\cdot)](y)
\]
as in Example \ref{saddleEx}, its resolvents are given, following Example \ref{resSaddleEx}, by
\[
\mathrm{R}_{\lambda}^l(s,x,y):=\argminimax_{(z,w)\in X\times Y}\left\{ l(s,z,w)-\frac{1}{2\lambda}d^2(z,x)+\frac{1}{2\lambda}d^2(w,y)\right\},
\]
which can be shown to be measurable similar to $\mathrm{prox}^f_\lambda$, so that it also follows that $A_l$ is a random monotone vector field. Similar to Example \ref{randomSubDiffEx}, considering the mean saddle function $L(x,y):=\int l(s,x,y)\,d\tau(s)$ (assuming that it is real-valued), we at least always have $\zer\underline{A_l}\subseteq \minimax L$.
\end{example}

\section{A stochastic proximal point algorithm}\label{sec:randPPA}

Extending the previous work of Bianchi \cite{Bianchi2016}, we now consider a  stochastic variant of the proximal point algorithm. Let $(E,\mathcal{E},\mu)$ be a probability space and let $X$ be a separable Hilbert-Hadamard space (recall also Remark \ref{rem:tangentSepAndMeas}). Further, let $A:E\times X\to 2^{TX}$ be a random monotone vector field such that $A(s,\cdot)$ satisfies the surjectivity condition for any $s\in E$.

The stochastic proximal point method is now given by the iteration
\begin{equation}
x_{n+1}:=J_{\lambda_n}(\xi_{n+1},x_n)\tag{SPPA}\label{PPA}
\end{equation}
for a given starting value $x_0\in X$, a sequence of parameters $(\lambda_n)\subseteq (0,\infty)$ and a sequence $(\xi_{n+1})$ of random variables $\xi_{n+1}:\Omega\to E$ for an ambient probability space $(\Omega,\mathcal{F},\PP)$ over which the iteration takes place. Note that each $x_n$ is thereby an ($X$-valued) random variable as $J_{\lambda_n}$ is a Carath\'eodory map.

In terms of the parameters, we make the assumptions that 
\begin{equation}
(\lambda_n)\in\ell_+^2\setminus\ell_+^1\text{ and that }(\xi_{n+1})\text{ is i.i.d.\ with distribution }\mu,\tag{A0}\label{A0}
\end{equation}
where we write $\ell_+^p$ for the space of nonnegative $p$-summable sequences. To distinguish the two notions of integration we get from the two probability spaces, we use $\int$ to denote integrals over $(E,\mathcal{E},\mu)$ and $\EE$ to denote integrals over $(\Omega,\mathcal{F},\PP)$. In further terms of notation, we in the following write $\mathcal{F}_n:=\sigma(\xi_1,\dots,\xi_n)$ as well as $\EE_n[\cdot]$ as a shorthand for the conditional expectation $\EE[\cdot\mid \mathcal{F}_n]$. Also, all (in)equalities are understood to hold almost surely (over the suitable probability space), if not stated otherwise.

Now, in the remainder of this section, we will establish a (quantitative) convergence result for the above stochastic proximal point method in the context of a strong monotonicity assumption. Concretely, we in the following assume that 
\begin{equation}
A(s,\cdot)\text{ is strongly monotone with modulus }\alpha(s)>0\text{ such that }\int\alpha\,d\mu>0.\tag{A1}\label{A1}
\end{equation}
As discussed before, as we are working over a Hilbert-Hadamard space, this property in fact entails that $\underline{A}$, the mean of the fields $A(s,\cdot)$, is strongly monotone with modulus $\underline{\alpha}:=\int\alpha\,d\mu>0$. We assume w.l.o.g.\ that $\alpha(s)\leq 1$ for any $s\in E$.

Motivated by the assumptions of \cite[Theorem 4]{Bianchi2016}, we assume that there exists a (hence unique) zero $x^*$ of $\underline{A}$ which satisfies 
\begin{equation}
x^*\in\mathcal{Z}_A(2).\tag{A2}\label{A2}
\end{equation}
Further, we fix a $\phi^*\in S^2_A(x^*)$ with $\int\phi^*\,d\mu=0_{x^*}$. 

Our key analytical ingredients will be two central almost sure inequalities, modeled after inequalities established in \cite{Bianchi2016}. Both of these rely on the following independence property derived from Lemma \ref{indepMetric}.

\begin{lemma}\label{indepProp}
For any $n\in\mathbb{N}$, we have $\EE_n[g_{x^*}(\phi^*(\xi_{n+1}),\log_{x^*}x_n)]=0$.
\end{lemma}
\begin{proof}
Using Lemma \ref{indepMetric} with $u=\phi^*(\xi_{n+1})$, $v=\log_{x^*}x_n$ and $\mathcal{T}_0=\mathcal{F}_n$, we get that
\[
\EE_n[g_{x^*}(\phi^*(\xi_{n+1}),\log_{x^*}x_n)]
=g_{x^*}(\EE_n[\phi^*(\xi_{n+1})],\EE_n[\log_{x^*}x_n]),
\]
using that $x_n$ and so $\log_{x^*}x_n$ are $\mathcal{F}_n$-measurable and that $\xi_{n+1}$ and so $\phi^*(\xi_{n+1})$ are independent of $\mathcal{F}_n$. By Lemma \ref{indepLem}, the independence of $\xi_{n+1}$ from $\mathcal{F}_n$ yields that $\EE_n[\phi^*(\xi_{n+1})]=\EE[\phi^*(\xi_{n+1})]$ (where one should note that $\phi^*(\xi_{n+1})\in L^2(\Omega,T_{x^*}X,\PP)$ as $\phi^*\in S^2_A(x^*)$). Further, by Lemma \ref{transfer} we have $\EE[\phi^*(\xi_{n+1})]=\int\phi^*\,d\mu=0_{x^*}$ and so $g_{x^*}(\EE_n[\phi^*(\xi_{n+1})],\EE_n[\log_{x^*}x_n])=0$. This yields the claim.
\end{proof}

\begin{remark}
Towards our main convergence result, we only use the assumption that $X$ is a Hilbert-Hadamard space to derive the above Lemma \ref{indepProp}. As such, our result extends to non-Hilbert-Hadamard spaces where only the tangent space $T_{x^*}X$ at the solution $x^*$ has flat curvature. This might be of help when one tries to apply the present results to spaces which are not fully Hilbert-Hadamard spaces, such as certain metric trees.
\end{remark}

The first of the two inequalities is now given in Lemma \ref{ineq1}, establishing a type of stochastic quasi-Fej\'er monotonicity for the iteration in question. This inequality is modeled after the proof of \cite[Proposition 1]{Bianchi2016} (see in particular p.\ 2244 therein).\footnote{The expression $\norm{A_{\lambda_n}(\xi_{n+1},x_n)}^2_{x_{n+1}}$ featuring in Lemma \ref{ineq1} could prove problematic from a measurability point of view, as it involves a random variable in the index of the tangent space norm, i.e.\ as the point of issue of a tangent space. However, note that $\lambda^2_n\norm{A_{\lambda_n}(\xi_{n+1},x_n)}_{x_{n+1}}^2=d^2(x_n,x_{n+1})$ as also crucially used in the proof, so that this expression is immediately measurable by the measurability of the resolvent and the joint measurability of the metric, using separability of $X$.}

\begin{lemma}\label{ineq1}
Let $\beta\in (0,\frac{1}{2}]$. For any $n\in\mathbb{N}$, we have
\[
\EE_n[d^2(x_{n+1},x^*)]\leq d^2(x_n,x^*)-\lambda_n^2(1-2\beta)\EE_n[\norm{A_{\lambda_n}(\xi_{n+1},x_n)}^2_{x_{n+1}}]+\lambda_n^2\frac{\int\norm{\phi^*}_{x^*}^2\,d\mu}{2\beta}
\]
\end{lemma}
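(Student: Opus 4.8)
The plan is to first derive a deterministic (pathwise) quasi-Fej\'er estimate and only at the very end pass to conditional expectations, where assumption \eqref{A3} and the i.i.d.\ hypothesis \eqref{A0} do the probabilistic work. Abbreviate $z=x_{n+1}$ and $u=A_{\lambda_n}(\xi_{n+1},x_n)$, and recall from the construction of \eqref{PPA} together with Lemma \ref{YosidaProp} that $u=\tfrac{1}{\lambda_n}\log_{z}x_n\in A(\xi_{n+1},z)$ and $\norm{u}_{z}=\tfrac{1}{\lambda_n}d(x_n,z)$; in particular $\log_{z}x_n=\lambda_n u$ and $d^2(x_n,z)=\lambda_n^2\norm{u}_{z}^2$.

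First I would bound $d^2(z,x^*)$ from above by applying Lemma \ref{tangentCat0} at the base point $z$ with $a=x_n$, $b=x^*$ and $t=s=1$, which after rearranging gives
\[
d^2(z,x^*)\leq d^2(x_n,x^*)-d^2(x_n,z)+2g_{z}(\log_{z}x_n,\log_{z}x^*).
\]
Using $\log_{z}x_n=\lambda_n u$ and homogeneity of $g_z$ in its first argument, the cross term becomes $2\lambda_n g_{z}(u,\log_{z}x^*)$, and by the monotonicity of $A(\xi_{n+1},\cdot)$ applied to the pairs $(z,u),(x^*,\phi^*(\xi_{n+1}))\in A(\xi_{n+1},\cdot)$ (the latter since $\phi^*\in S^2_A(x^*)$) we may replace it by $-2\lambda_n g_{x^*}(\phi^*(\xi_{n+1}),\log_{x^*}z)$. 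Together with $d^2(x_n,z)=\lambda_n^2\norm{u}_{z}^2$ this yields
\[
d^2(z,x^*)\leq d^2(x_n,x^*)-\lambda_n^2\norm{u}_{z}^2-2\lambda_n g_{x^*}(\phi^*(\xi_{n+1}),\log_{x^*}z).
\]

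The key manoeuvre, and the step I expect to be the main obstacle, is that this cross term is anchored at $z=x_{n+1}$, whereas the independence assumption \eqref{A3} is phrased at $x_n$, the $\mathcal{F}_n$-measurable quantity we can control. To bridge this I would add and subtract $g_{x^*}(\phi^*(\xi_{n+1}),\log_{x^*}x_n)$ and estimate the difference by the Lipschitz continuity of $g_{x^*}$ in its second argument (Lemma \ref{nonexpCat0}) together with the nonexpansiveness of $\log_{x^*}$, obtaining $\abs{g_{x^*}(\phi^*(\xi_{n+1}),\log_{x^*}z)-g_{x^*}(\phi^*(\xi_{n+1}),\log_{x^*}x_n)}\leq\norm{\phi^*(\xi_{n+1})}_{x^*}d(x_n,z)$. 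Since $d(x_n,z)=\lambda_n\norm{u}_{z}$, Young's inequality in the form $2ab\leq 2\beta b^2+\tfrac{1}{2\beta}a^2$ with $a=\norm{\phi^*(\xi_{n+1})}_{x^*}$ and $b=\norm{u}_{z}$ splits the resulting error into a term $2\beta\lambda_n^2\norm{u}_{z}^2$, absorbed against $-\lambda_n^2\norm{u}_{z}^2$ to produce the factor $(1-2\beta)$, and a term $\tfrac{1}{2\beta}\lambda_n^2\norm{\phi^*(\xi_{n+1})}_{x^*}^2$.

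Finally I would apply $\EE_n[\cdot]$ to the resulting pathwise inequality. The term $g_{x^*}(\phi^*(\xi_{n+1}),\log_{x^*}x_n)$ vanishes in conditional expectation by \eqref{A3}, while the remaining moment is handled by independence: as $\xi_{n+1}$ is independent of $\mathcal{F}_n$, the real-valued variable $\norm{\phi^*(\xi_{n+1})}_{x^*}^2$ satisfies $\EE_n[\norm{\phi^*(\xi_{n+1})}_{x^*}^2]=\EE[\norm{\phi^*(\xi_{n+1})}_{x^*}^2]$, which by the transformation theorem (Lemma \ref{transfer}) and $\xi_{n+1}\sim\mu$ equals $\int\norm{\phi^*}_{x^*}^2\,d\mu$ (finite since $\phi^*\in S^2_A(x^*)$). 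Collecting terms gives exactly the claimed inequality. A minor point to treat with care is the measurability of $\norm{u}_{z}^2$, which as noted equals $\lambda_n^{-2}d^2(x_n,x_{n+1})$ and is therefore measurable by the measurability of the resolvent and joint measurability of the metric.
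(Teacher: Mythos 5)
Your proposal is correct and follows essentially the same route as the paper's proof: the inequality from Lemma \ref{tangentCat0} at base point $x_{n+1}$, the monotonicity swap to the cross term anchored at $x^*$, the shift from $\log_{x^*}x_{n+1}$ to $\log_{x^*}x_n$ via Lemma \ref{nonexpCat0} and nonexpansiveness of $\log_{x^*}$, the Young-type split producing the $(1-2\beta)$ and $\tfrac{1}{2\beta}$ factors, and finally conditional expectation using \eqref{A3} and the transformation theorem. The measurability remark at the end matches the paper's own footnote on this point.
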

\begin{proof}
If not stated otherwise, all equalities and inequalities are understood to hold almost surely (if applicable). Using Lemma \ref{tangentCat0}, we get
\[
d^2(x_{n+1},x^*)+d^2(x_{n+1},x_n)-2g_{x_{n+1}}(\log_{x_{n+1}} x_n,\log_{x_{n+1}} x^*)\leq d^2(x_n,x^*).
\]
Now, note that
\begin{align*}
g_{x_{n+1}}(\log_{x_{n+1}} x_n,\log_{x_{n+1}} x^*)&=\lambda_ng_{x_{n+1}}(\tfrac{1}{\lambda_n}\log_{x_{n+1}} x_n,\log_{x_{n+1}} x^*)\\
&=\lambda_n g_{x_{n+1}}(A_{\lambda_n}(\xi_{n+1},x_n),\log_{x_{n+1}} x^*)\\
&\leq -\lambda_n g_{x^*}(\phi^*(\xi_{n+1}),\log_{x^*} x_{n+1}),
\end{align*}
where the last inequality used the monotonicity of $A$, and that
\[
A_{\lambda_n}(\xi_{n+1},x_n)\in A(\xi_{n+1},J_{\lambda_n}(\xi_{n+1},x_n))=A(\xi_{n+1},x_{n+1})
\]
as well as $\phi^*(\xi_{n+1})\in A(\xi_{n+1},x^*)$. Now, note that
\begin{align*}
&-\lambda_n g_{x^*}(\phi^*(\xi_{n+1}),\log_{x^*} x_{n+1})\\
&\qquad=-\lambda_n g_{x^*}(\phi^*(\xi_{n+1}),\log_{x^*} x_{n})\\
&\qquad\phantom{=}\;+\lambda_n (g_{x^*}(\phi^*(\xi_{n+1}),\log_{x^*} x_{n})- g_{x^*}(\phi^*(\xi_{n+1}),\log_{x^*} x_{n+1}))\\
&\qquad\leq -\lambda_n g_{x^*}(\phi^*(\xi_{n+1}),\log_{x^*} x_{n})+\lambda_n\norm{\phi^*(\xi_{n+1})}_{x^*}d_{x^*}(\log_{x^*} x_n,\log_{x^*} x_{n+1})\\
&\qquad\leq -\lambda_n g_{x^*}(\phi^*(\xi_{n+1}),\log_{x^*} x_{n})+\lambda_n\norm{\phi^*(\xi_{n+1})}_{x^*}d(x_n,x_{n+1})
\end{align*}
using Lemma \ref{nonexpCat0} and the fact that $\log_{x^*}$ is nonexpansive in a Hadamard space (recall Lemma \ref{lognonexp}). Using (analogously to the proof of Lemma 2 in \cite{Bianchi2016}) that
\[
\lambda_n\norm{\phi^*(\xi_{n+1})}_{x^*}d(x_n,x_{n+1})\leq \frac{\lambda_n^2}{4\beta}\norm{\phi^*(\xi_{n+1})}_{x^*}^2+\beta d^2(x_n,x_{n+1}),
\]
we get that
\begin{align*}
&g_{x_{n+1}}(\log_{x_{n+1}} x_n,\log_{x_{n+1}} x^*)\\
&\qquad\leq -\lambda_n g_{x^*}(\phi^*(\xi_{n+1}),\log_{x^*} x_{n})+\frac{\lambda_n^2}{4\beta}\norm{\phi^*(\xi_{n+1})}_{x^*}^2+\beta d^2(x_n,x_{n+1})
\end{align*}
and so, using that $\lambda^2_n\norm{A_{\lambda_n}(\xi_{n+1},x_n)}_{x_{n+1}}^2=d^2(x_n,x_{n+1})$, we get
\begin{align*}
d^2(x_{n+1},x^*)\leq\; &d^2(x_n,x^*)-\lambda_n^2(1-2\beta)\norm{A_{\lambda_n}(\xi_{n+1},x_n)}_{x_{n+1}}^2\\
&-2\lambda_n g_{x^*}(\phi^*(\xi_{n+1}),\log_{x^*} x_{n})+\frac{\lambda_n^2}{2\beta}\norm{\phi^*(\xi_{n+1})}_{x^*}^2.
\end{align*}
We now apply the conditional expectation $\EE_n$. Using the usual transformation rule, we get that $\EE_n[\norm{\phi^*(\xi_{n+1})}_{x^*}^2]= \int\norm{\phi^*}_{x^*}^2\,d\mu$. We get $\EE_n[g_{x^*}(\phi^*(\xi_{n+1}),\log_{x^*} x_{n})]=0$ using Lemma \ref{indepProp}. This yields
\[
\EE_n[d^2(x_{n+1},x^*)]\leq d^2(x_n,x^*)-\lambda_n^2(1-2\beta)\EE_n[\norm{A_{\lambda_n}(\xi_{n+1},x_n)}_{x_{n+1}}^2]+\lambda_n^2\frac{\int\norm{\phi^*}_{x^*}^2\,d\mu}{2\beta}
\]
which was the claim.
\end{proof}

Importantly, the above inequality yields the square integrability of the sequence $(x_n)$:

\begin{corollary}\label{L2Bound}
$\EE[d^2(x_n,x^*)]\leq d^2(x_0,x^*)+\int\norm{\phi^*}_{x^*}^2\,d\mu\sum_{n=0}^\infty\lambda_n^2<\infty$ for any $n\in \mathbb{N}$.
\end{corollary}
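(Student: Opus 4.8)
The plan is to read off the result directly from Lemma \ref{ineq1} by making the extremal choice of the free parameter $\beta$ and then telescoping. First I would set $\beta=\tfrac{1}{2}$, the largest admissible value in $(0,\tfrac{1}{2}]$. This is the natural choice because it annihilates the coefficient $1-2\beta$ of the nonpositive Yosida term $\lambda_n^2\EE_n[\norm{A_{\lambda_n}(\xi_{n+1},x_n)}^2_{x_{n+1}}]$, which we want to discard for an upper bound in any case, and it simultaneously makes the surviving constant $\tfrac{1}{2\beta}$ equal to $1$. With $\beta=\tfrac{1}{2}$, Lemma \ref{ineq1} collapses to the clean recursion
\[
\EE_n[d^2(x_{n+1},x^*)]\leq d^2(x_n,x^*)+\lambda_n^2\int\norm{\phi^*}_{x^*}^2\,d\mu.
\]

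Next I would pass from the conditional to the unconditional expectation. Applying $\EE[\cdot]$ to both sides and using the tower property $\EE[\EE_n[\,\cdot\,]]=\EE[\,\cdot\,]$, one obtains
\[
\EE[d^2(x_{n+1},x^*)]\leq \EE[d^2(x_n,x^*)]+\lambda_n^2\int\norm{\phi^*}_{x^*}^2\,d\mu.
\]
I would then iterate this one-step estimate from index $0$ up to $n-1$; the intermediate terms telescope, leaving
\[
\EE[d^2(x_n,x^*)]\leq \EE[d^2(x_0,x^*)]+\int\norm{\phi^*}_{x^*}^2\,d\mu\sum_{k=0}^{n-1}\lambda_k^2,
\]
and bounding the partial sum by the full series $\sum_{k=0}^\infty\lambda_k^2$ delivers the displayed inequality. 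For the finiteness assertion I would observe that each factor on the right is finite: $\EE[d^2(x_0,x^*)]$ is finite since $x_0$ is a fixed deterministic starting point; $\int\norm{\phi^*}_{x^*}^2\,d\mu<\infty$ because $\phi^*\in S^2_A(x^*)$ by the choice fixed after assumption \eqref{A2}, so $\phi^*$ lies in the relevant $L^2$-space and its squared tangent norm is integrable; and $\sum_{k=0}^\infty\lambda_k^2<\infty$ by the assumption $(\lambda_n)\in\ell^2_+\setminus\ell^1_+$ in \eqref{A0}.

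I do not expect any genuine obstacle here, as the statement is an immediate consequence of Lemma \ref{ineq1}; the only points requiring minor care are the choice $\beta=\tfrac{1}{2}$ to eliminate the Yosida term and the remark that $d^2(x_n,x^*)$ is $\mathcal{F}_n$-measurable (since $x_n$ is a function of $\xi_1,\dots,\xi_n$), so that the tower property applies cleanly when passing to the unconditional expectation.
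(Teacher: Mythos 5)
Your proposal is correct and matches the paper's (implicit) argument: the corollary is stated without proof precisely because it follows from Lemma \ref{ineq1} by dropping the nonpositive Yosida term (your choice $\beta=\tfrac{1}{2}$ does this cleanly), taking total expectations via the tower property for the real-valued random variable $d^2(x_{n+1},x^*)$, and telescoping, with finiteness coming from $\phi^*\in S^2_A(x^*)$ and $(\lambda_n)\in\ell^2_+$. No gaps.
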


The next inequality again establishes a type of stochastic quasi-Fej\'er monotonicity for the iteration in question, however now with a different selection of error terms in the recurrence inequality based on the strong monotonicity of the field which now plays a crucial role, compared to the former inequality where it was not used. While at first sight perhaps redundant, it is exactly the interplay between this and the former inequality that will allow us to establish rates of convergence of the iteration in the end. This inequality is modeled after the proof of \cite[Theorem 4]{Bianchi2016} (see in particular p.\ 2253 therein).

\begin{lemma}\label{ineq2}
For any $n\in\mathbb{N}$, we have
\[
\EE_n[d^2(x_{n+1},x^*)]\leq(1+2\lambda_n^2)d^2(x_n,x^*)-2\lambda_n\underline{\alpha}d^2(x_n,x^*)+\lambda_n^2V_n
\]
for $V_n=2\EE_n[\norm{A_{\lambda_n}(\xi_{n+1},x_n)}_{x_{n+1}}^2]+\int\norm{\phi^*}^2_{x^*}\,d\mu$.
\end{lemma}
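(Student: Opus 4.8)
The plan is to mimic the opening of the proof of Lemma \ref{ineq1}, but to now exploit the \emph{strong} monotonicity assumption \eqref{A1} in place of plain monotonicity, which will produce the extra negative term $-2\lambda_n\underline{\alpha}d^2(x_n,x^*)$. First I would start from the same $\mathrm{CN}$-type estimate coming from Lemma \ref{tangentCat0} at the basepoint $x_{n+1}$, namely
\[
d^2(x_{n+1},x^*)+d^2(x_{n+1},x_n)-2g_{x_{n+1}}(\log_{x_{n+1}}x_n,\log_{x_{n+1}}x^*)\leq d^2(x_n,x^*),
\]
and rewrite the cross term using $\tfrac{1}{\lambda_n}\log_{x_{n+1}}x_n=A_{\lambda_n}(\xi_{n+1},x_n)\in A(\xi_{n+1},x_{n+1})$ exactly as before. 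The crucial departure is that when I apply monotonicity of the field $A(\xi_{n+1},\cdot)$ to the pair $(x_{n+1},A_{\lambda_n}(\xi_{n+1},x_n))$ and $(x^*,\phi^*(\xi_{n+1}))$, I would invoke strong monotonicity with modulus $\alpha(\xi_{n+1})$, giving
\[
\lambda_n g_{x_{n+1}}(A_{\lambda_n}(\xi_{n+1},x_n),\log_{x_{n+1}}x^*)\leq -\lambda_n g_{x^*}(\phi^*(\xi_{n+1}),\log_{x^*}x_{n+1})-\lambda_n\alpha(\xi_{n+1})d^2(x_{n+1},x^*).
\]

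Next I would handle the term $-\lambda_n g_{x^*}(\phi^*(\xi_{n+1}),\log_{x^*}x_{n+1})$ by splitting it as the $x_n$-version plus a correction, controlled via Lemma \ref{nonexpCat0} and nonexpansiveness of $\log_{x^*}$, just as in Lemma \ref{ineq1}; here, however, I would use a Young-type split with coefficient $1$ rather than $\beta$, i.e.\ $\lambda_n\norm{\phi^*(\xi_{n+1})}_{x^*}d(x_n,x_{n+1})\leq \tfrac{\lambda_n^2}{2}\norm{\phi^*(\xi_{n+1})}_{x^*}^2+\tfrac{1}{2}d^2(x_n,x_{n+1})$, so that after substituting $d^2(x_n,x_{n+1})=\lambda_n^2\norm{A_{\lambda_n}(\xi_{n+1},x_n)}_{x_{n+1}}^2$ the coefficient on $\norm{A_{\lambda_n}}^2$ comes out as a clean $2$ (the $-1$ from the CN-inequality plus the $+\tfrac12$ from Young, then doubled). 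I would then apply $\EE_n$, using the transformation rule to get $\EE_n[\norm{\phi^*(\xi_{n+1})}_{x^*}^2]=\int\norm{\phi^*}_{x^*}^2\,d\mu$ and assumption \eqref{A3} to kill the term $\EE_n[g_{x^*}(\phi^*(\xi_{n+1}),\log_{x^*}x_n)]$. This assembles into the bound with $V_n$ as stated, except that the strong-monotonicity term appears as $-2\lambda_n\EE_n[\alpha(\xi_{n+1})d^2(x_{n+1},x^*)]$ attached to $x_{n+1}$ rather than to $x_n$.

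The main obstacle will be converting $-2\lambda_n\EE_n[\alpha(\xi_{n+1})d^2(x_{n+1},x^*)]$ into the clean $-2\lambda_n\underline{\alpha}d^2(x_n,x^*)$ that the statement demands, at the cost of the benign factor $(1+2\lambda_n^2)$ multiplying $d^2(x_n,x^*)$. My plan is to first replace $d^2(x_{n+1},x^*)$ by $d^2(x_n,x^*)$ up to a controllable error using the triangle inequality together with $d(x_n,x_{n+1})=\lambda_n\norm{A_{\lambda_n}(\xi_{n+1},x_n)}_{x_{n+1}}$, and to use the independence of $\xi_{n+1}$ from $\mathcal{F}_n$ (so that $\EE_n[\alpha(\xi_{n+1})]=\int\alpha\,d\mu=\underline{\alpha}$ by Lemma \ref{indepLem} and Lemma \ref{transfer}, with $x_n$ being $\mathcal{F}_n$-measurable) to extract the mean modulus $\underline{\alpha}$. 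The normalization $\alpha(s)\leq 1$ is what keeps the resulting error terms absorbable into $2\lambda_n^2 d^2(x_n,x^*)$ and into the $\norm{A_{\lambda_n}}^2$ contribution already present in $V_n$; I expect the bookkeeping to parallel p.\ 2253 of \cite{Bianchi2016}, and the nonpositive curvature to enter only through the facts already isolated in Lemmas \ref{tangentCat0}, \ref{nonexpCat0} and the nonexpansiveness of $\log_{x^*}$.
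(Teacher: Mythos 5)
Your proposal is correct and follows essentially the same route as the paper: the same CN-type estimate at $x_{n+1}$, strong monotonicity in place of monotonicity, the Young split with coefficient $\tfrac12$, and then a lower bound $d^2(x_{n+1},x^*)\geq d^2(x_n,x^*)-\lambda_n d^2(x_n,x^*)-\lambda_n\norm{A_{\lambda_n}(\xi_{n+1},x_n)}_{x_{n+1}}^2$ (which the paper obtains from a second application of Lemma \ref{tangentCat0} at $x_n$ together with $2g_{x_n}(u,v)\leq\norm{u}_{x_n}^2+\norm{v}_{x_n}^2$, while your triangle-inequality-plus-Young argument yields the identical estimate), followed by $\alpha(s)\leq 1$ to absorb the errors and the conditional expectation with $\EE_n[\alpha(\xi_{n+1})]=\underline{\alpha}$ and \eqref{A3} to conclude.
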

\begin{proof}
We proceed similarly to the proof of Lemma \ref{ineq1} and get
\[
d^2(x_{n+1},x^*)+d^2(x_{n+1},x_n)-2g_{x_{n+1}}(\log_{x_{n+1}} x_n,\log_{x_{n+1}} x^*)\leq d^2(x_n,x^*)
\]
as before using Lemma \ref{tangentCat0}. Applying strong monotonicity in place of monotonicity then yields
\begin{align*}
&g_{x_{n+1}}(\log_{x_{n+1}} x_n,\log_{x_{n+1}} x^*)\\
&\qquad\leq -\lambda_n g_{x^*}(\phi^*(\xi_{n+1}),\log_{x^*} x_{n+1})-\lambda_n\alpha(\xi_{n+1})d^2(x_{n+1},x^*).
\end{align*}
As before in the proof of Lemma \ref{ineq1} (now with $\beta=\frac{1}{2}$), we get
\begin{align*}
&-\lambda_n g_{x^*}(\phi^*(\xi_{n+1}),\log_{x^*} x_{n+1})\\
&\qquad\leq -\lambda_n g_{x^*}(\phi^*(\xi_{n+1}),\log_{x^*} x_{n})+\frac{\lambda_n^2}{2}\norm{\phi^*(\xi_{n+1})}_{x^*}^2+\frac{1}{2} d^2(x_n,x_{n+1})
\end{align*}
and so
\begin{align*}
d^2(x_{n+1},x^*)\leq\; &d^2(x_n,x^*)-2\lambda_n\alpha(\xi_{n+1})d^2(x_{n+1},x^*)\\
&-2\lambda_n g_{x^*}(\phi^*(\xi_{n+1}),\log_{x^*} x_{n})+\lambda_n^2\norm{\phi^*(\xi_{n+1})}_{x^*}^2.
\end{align*}
Now, using Lemma \ref{tangentCat0} again, we get
\begin{align*}
d^2(x_{n+1},x^*)&\geq d^2(x_n,x_{n+1})+d^2(x_n,x^*)-2g_{x_n}(\log_{x_n} x_{n+1},\log_{x_n} x^*)\\
&\geq d^2(x_n,x^*)-2\lambda_ng_{x_n}(\tfrac{1}{\lambda_n}\log_{x_n} x_{n+1},\log_{x_n}x^*)\\
&\geq d^2(x_n,x^*)-\lambda_n\norm{\tfrac{1}{\lambda_n}\log_{x_n} x_{n+1}}_{x_n}^2-\lambda_n \norm{\log_{x_n} x^*}_{x_n}^2
\end{align*}
where the third inequality follows using the definition of $g_{x_n}$. Combined, this yields
\begin{gather*}
d^2(x_{n+1},x^*)\leq d^2(x_n,x^*)-2\lambda_n\alpha(\xi_{n+1})d^2(x_n,x^*)+2\lambda_n^2\norm{\tfrac{1}{\lambda_n}\log_{x_n} x_{n+1}}_{x_n}^2\\
+2\lambda_n^2 \norm{\log_{x_n} x^*}_{x_n}^2-2\lambda_n g_{x^*}(\phi^*(\xi_{n+1}),\log_{x^*} x_{n})+\lambda_n^2\norm{\phi^*(\xi_{n+1})}_{x^*}^2,
\end{gather*}
where we have in particular used that $\alpha(s)\leq 1$. Note now that $\norm{\log_{x_n} x^*}_{x_n}=d(x_n,x^*)$ and that $\lVert\tfrac{1}{\lambda_n}\log_{x_n} x_{n+1}\rVert_{x_n}=\tfrac{1}{\lambda_n}d(x_n,x_{n+1})=\norm{A_{\lambda_n}(\xi_{n+1},x_n)}_{x_{n+1}}$, so that we have
\begin{gather*}
d^2(x_{n+1},x^*)\leq  (1+2\lambda_n^2)d^2(x_n,x^*)-2\lambda_n\alpha(\xi_{n+1})d^2(x_n,x^*)\\
+2\lambda_n^2\norm{A_{\lambda_n}(\xi_{n+1},x_n)}_{x_{n+1}}^2-2\lambda_n g_{x^*}(\phi^*(\xi_{n+1}),\log_{x^*} x_{n})+\lambda_n^2\norm{\phi^*(\xi_{n+1})}_{x^*}^2.
\end{gather*}
We now again apply the conditional expectation $\EE_n$. Using the usual transformation rule, we get $\EE_n[\norm{\phi^*(\xi_{n+1})}_{x^*}^2]= \int\norm{\phi^*}_{x^*}^2\,d\mu$ and $\EE_n[g_{x^*}(\phi^*(\xi_{n+1}),\log_{x^*} x_{n})]=0$ follows from Lemma \ref{indepProp}, both as before. Also, using the independence of $\xi_{n+1}$ and $x_n$ as well as the usual transformation rule yields $\EE_n[\alpha(\xi_{n+1})d^2(x_n,x^*)]=\underline{\alpha}d^2(x_n,x^*)$. Combined, we have
\[
\EE_n[d^2(x_{n+1},x^*)]\leq(1+2\lambda_n^2)d^2(x_n,x^*)-2\lambda_n\underline{\alpha}d^2(x_n,x^*)+\lambda_n^2V_n
\]
which was the claim.
\end{proof}

Next, we endow our qualitative assumptions on the parameter sequence $(\lambda_n)$ with moduli witnessing their quantitative content. To be precise, we in the following assume that we have functions $\chi:(0,\infty)\to\mathbb{N}$ and $\theta:\mathbb{N}\times (0,\infty)\to\mathbb{N}$ such that 
\begin{enumerate}
\item $\sum_{n=\chi(\varepsilon)}^\infty \lambda_n^2<\varepsilon$ for all $\varepsilon>0$,
\item $\sum_{n=k}^{\theta(k,b)}\lambda_n\geq b$ for all $b>0$ and $k\in\mathbb{N}$.
\end{enumerate}
We also assume a bound $\Lambda > \sum_{n=0}^\infty \lambda_n^2$ and that we are given a $c>0$ with $c>\int\norm{\phi^*}_{x^*}^2\,d\mu$. Lastly, we assume that $b>0$ satisfies $b>d^2(x_0,x^*)$.

The second-to-last key quantitative result that we quote from the literature is the following quantitative version of a lemma of Qihou \cite{Qihou2001} (see also \cite[Lemma 5.31]{BauschkeCombettes2017}):

\begin{lemma}[{see \cite[Theorem 3.2]{NeriPowell2024}}]\label{qihou}
Let $(x_n)$, $(\alpha_n)$, $(\beta_n)$ and $(\gamma_n)$ be sequences of nonnegative reals with
\[
x_{n+1}\leq (1+\alpha_n)x_n-\beta_n+\gamma_n
\]
for all $n\in\mathbb{N}$. If $\prod_{i=0}^\infty(1+\alpha_i)<\infty$ and $\sum_{i=0}^\infty\gamma_i<\infty$, then $(x_n)$ converges and $\sum_{i=0}^\infty\beta_i<\infty$.

Further, if $K,L,M>0$ satisfy $x_0<K$, $\prod_{i=0}^\infty(1+\alpha_i)<L$ and $\sum_{i=0}^\infty\gamma_i<M$, then $\sum_{i=0}^\infty\beta_i<L(K+M)$.
\end{lemma}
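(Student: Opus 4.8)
The plan is to eliminate the multiplicative factor $(1+\alpha_n)$ by normalizing with the partial products, reducing the recurrence to a purely additive one to which a simple summation and telescoping argument applies. Concretely, I set $P_0 := 1$ and $P_n := \prod_{i=0}^{n-1}(1+\alpha_i)$ for $n\geq 1$. Since each factor is $\geq 1$, the sequence $(P_n)$ is non-decreasing, and by hypothesis it is bounded above by $P_\infty := \prod_{i=0}^\infty(1+\alpha_i)<\infty$; hence $1\leq P_n\leq P_\infty$ for all $n$. Dividing the given inequality by $P_{n+1}=(1+\alpha_n)P_n$ and writing $y_n := x_n/P_n$ (so that $y_0=x_0$), I obtain the additive recurrence
\[
y_{n+1}\leq y_n-\frac{\beta_n}{P_{n+1}}+\frac{\gamma_n}{P_{n+1}}.
\]

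For the summability of $(\beta_n)$ and the quantitative bound, I would sum this from $0$ to $n$, exploit the telescoping, and use $y_{n+1}\geq 0$ together with $1\leq P_{i+1}\leq P_\infty$ to get
\[
\frac{1}{P_\infty}\sum_{i=0}^n\beta_i\leq\sum_{i=0}^n\frac{\beta_i}{P_{i+1}}\leq y_0+\sum_{i=0}^n\frac{\gamma_i}{P_{i+1}}\leq x_0+\sum_{i=0}^\infty\gamma_i,
\]
whence $\sum_{i=0}^\infty\beta_i\leq P_\infty\bigl(x_0+\sum_{i=0}^\infty\gamma_i\bigr)<\infty$. Under the quantitative hypotheses $x_0<K$, $P_\infty<L$ and $\sum\gamma_i<M$, this last estimate yields $\sum_{i=0}^\infty\beta_i<L(K+M)$.

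For the convergence of $(x_n)$, I would pass to the tail-corrected sequence $w_n := y_n+\sum_{i=n}^\infty \gamma_i/P_{i+1}$, noting the tail sum is finite since $\sum_{i}\gamma_i/P_{i+1}\leq\sum_i\gamma_i<\infty$. Using the identity $\gamma_n/P_{n+1}+\sum_{i=n+1}^\infty\gamma_i/P_{i+1}=\sum_{i=n}^\infty\gamma_i/P_{i+1}$, the additive recurrence collapses to $w_{n+1}\leq w_n-\beta_n/P_{n+1}\leq w_n$, so $(w_n)$ is non-increasing; being bounded below by $0$, it converges. Since the tail sum tends to $0$, $y_n=w_n-\sum_{i=n}^\infty\gamma_i/P_{i+1}$ converges, and therefore $x_n=P_n y_n$ converges (to $P_\infty$ times the limit of $y_n$).

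The computations are routine; the only genuinely load-bearing idea is the normalization by the partial products $P_n$, which converts the geometric-growth factor into the bounded convergent weights $1/P_{n+1}\in[1/P_\infty,1]$, simultaneously preserving the sign structure needed to bound $\sum\beta_i$ and enabling the telescoping for convergence. The mild point I would be careful with is the strictness of the final estimate: from $\sum\beta_i\leq P_\infty(x_0+\sum\gamma_i)$ one gets $\sum\beta_i<L(K+M)$ since $P_\infty<L$ and $x_0+\sum\gamma_i<K+M$ with $L>0$, the degenerate case $x_0+\sum\gamma_i=0$ being immediate as $L(K+M)>0$.
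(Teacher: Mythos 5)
Your proof is correct and complete: the normalization $y_n = x_n/P_n$ by the partial products turns the recurrence into a telescoping one, the bounds $1 \leq P_{i+1} \leq P_\infty < L$ give exactly the stated estimate $\sum_{i=0}^\infty \beta_i < L(K+M)$, and the tail-corrected sequence $w_n$ handles convergence cleanly. Note that the paper itself does not prove this lemma but imports it verbatim as Theorem 3.2 of the cited reference, so there is no in-paper argument to compare against; your argument is the standard one for such quantitative Qihou-type lemmas and fully justifies the statement as used here.
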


Finally, we only require the following rather immediate result (which can for example be found in \cite{NeriPischkePowell2025}). 

\begin{lemma}\label{sumconv}
Suppose that $(u_n)$, $(v_n)$ are sequences of nonnegative reals with $L>0$ such that $\sum_{n=0}^\infty u_nv_n< L$ and $\theta:\mathbb{N}\times(0,\infty)\to \mathbb{N}$ such that $\sum_{n=k}^{\theta(k,b)} u_n\geq b$ for all $b>0$ and $k\in\mathbb{N}$. Then $\liminf_{n\to\infty}v_n=0$ with 
\[
\forall\varepsilon>0\ \forall N\in\mathbb{N}\ \exists n\in [N;\theta(N,L/\varepsilon)](v_n<\varepsilon).
\]
\end{lemma}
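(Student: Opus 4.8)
The plan is to establish the quantitative metastatement first, by a short direct argument by contradiction, and then read off the $\liminf$ assertion as an immediate consequence. The guiding intuition is that the hypothesis on $\theta$ encodes the divergence $\sum_n u_n=\infty$, while the convergence $\sum_n u_n v_n<L$ forces $v_n$ to become small along exactly those indices where the weights $u_n$ accumulate enough mass.

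First I would fix $\varepsilon>0$ and $N\in\mathbb{N}$ and argue by contradiction, assuming that $v_n\geq\varepsilon$ for every $n\in[N;\theta(N,L/\varepsilon)]$. Applying the assumption on $\theta$ with the instantiation $b:=L/\varepsilon$ and $k:=N$ gives $\sum_{n=N}^{\theta(N,L/\varepsilon)}u_n\geq L/\varepsilon$. Combining this with the assumed lower bound $v_n\geq\varepsilon$ and the nonnegativity of the $u_n$ then yields
\[
\sum_{n=N}^{\theta(N,L/\varepsilon)}u_nv_n\geq\varepsilon\sum_{n=N}^{\theta(N,L/\varepsilon)}u_n\geq L.
\]
Since every term $u_nv_n$ is nonnegative, this finite partial sum is bounded above by the full series $\sum_{n=0}^\infty u_nv_n<L$, which is a contradiction. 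Hence there must exist some $n\in[N;\theta(N,L/\varepsilon)]$ with $v_n<\varepsilon$, and since $\varepsilon>0$ and $N$ were arbitrary this is precisely the displayed metastatement.

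Finally, the qualitative conclusion $\liminf_{n\to\infty}v_n=0$ follows at once: given any $\varepsilon>0$ and any $N\in\mathbb{N}$, the metastatement produces an index $n\geq N$ with $v_n<\varepsilon$, so that $\inf_{n\geq N}v_n<\varepsilon$ holds for every $N$, whence $\liminf_{n\to\infty}v_n\leq\varepsilon$; letting $\varepsilon\to 0$ and using $v_n\geq 0$ gives $\liminf_{n\to\infty}v_n=0$. I do not expect any substantial obstacle here — the argument is elementary — and the only point requiring a little care is the correct instantiation $b=L/\varepsilon$ in the $\theta$-hypothesis, which is calibrated exactly so that the factor $\varepsilon$ cancels against $L/\varepsilon$ to recover the threshold $L$ and thereby contradict $\sum_{n=0}^\infty u_nv_n<L$.
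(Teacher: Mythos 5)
Your argument is correct and is essentially identical to the paper's own proof: the same contradiction with the instantiation $b=L/\varepsilon$, the same chain of inequalities $L\leq\varepsilon\sum_{n=N}^{\theta(N,L/\varepsilon)}u_n\leq\sum_{n=0}^\infty u_nv_n<L$, and the same immediate derivation of the $\liminf$ statement. No gaps.
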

\begin{proof}
For arbitrary $\varepsilon>0$ and $N\in\mathbb{N}$, suppose for a contradiction that $v_n\geq\varepsilon$ for all $n\in [N;\theta(N,L/\varepsilon)]$. Then $L\leq\varepsilon\sum_{n=N}^{\theta(N,L/\varepsilon)}u_n\leq \sum_{n=N}^{\theta(N,L/\varepsilon)} u_nv_n\leq \sum_{n=0}^{\infty} u_nv_n < L$, which is a contradiction.
\end{proof}

We can now employ this to derive a so-called $\liminf$-rate in expectation for the sequence $d^2(x_n,x^*)$:

\begin{lemma}\label{liminf}
It holds that $\liminf_{n\to\infty}\EE[d^2(x_n,x^*)]=0$, with 
\[
\forall\varepsilon>0\ \forall N\in\mathbb{N}\ \exists n\in [N;\theta(N,D/\varepsilon)](\EE[d^2(x_n,x^*)]<\varepsilon)
\]
where $C:=8\left(b+\Lambda2c\right)+\Lambda c$ and $D:=e^{2\Lambda}(b+C)/2\underline{\alpha}$.
\end{lemma}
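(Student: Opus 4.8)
The plan is to pass to full expectations in the two key almost sure inequalities, exploit their interplay to bound a weighted sum, and then feed the result into the quantitative Qihou lemma (Lemma \ref{qihou}) followed by Lemma \ref{sumconv}. Write $a_n := \EE[d^2(x_n,x^*)]$. By the tower property, applying $\EE$ to the conditional estimates of Lemmas \ref{ineq1} and \ref{ineq2} simply replaces each $\EE_n[\,\cdot\,]$ by $\EE[\,\cdot\,]$.

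First I would invoke Lemma \ref{ineq1} with the choice $\beta=\tfrac14$, so that $1-2\beta=\tfrac12$ and $\tfrac{1}{2\beta}=2$, obtaining after taking full expectation the telescoping estimate
\[
\tfrac12\lambda_n^2\,\EE[\norm{A_{\lambda_n}(\xi_{n+1},x_n)}_{x_{n+1}}^2] \leq a_n - a_{n+1} + 2\lambda_n^2\int\norm{\phi^*}_{x^*}^2\,d\mu.
\]
Summing over $n$, discarding the nonnegative tail term $a_{N+1}$, and using $a_0<b$, $\int\norm{\phi^*}_{x^*}^2\,d\mu<c$ and $\sum_n\lambda_n^2<\Lambda$ gives $\sum_{n=0}^\infty\lambda_n^2\EE[\norm{A_{\lambda_n}(\xi_{n+1},x_n)}_{x_{n+1}}^2]<2(b+2c\Lambda)$. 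Since $\EE[V_n]=2\EE[\norm{A_{\lambda_n}(\xi_{n+1},x_n)}_{x_{n+1}}^2]+\int\norm{\phi^*}_{x^*}^2\,d\mu$, combining this with $\sum_n\lambda_n^2<\Lambda$ yields precisely $\sum_{n=0}^\infty\lambda_n^2\EE[V_n]<4(b+2c\Lambda)+c\Lambda=C$.

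Next, applying $\EE$ to Lemma \ref{ineq2} puts the recurrence into the exact form required by Lemma \ref{qihou}, namely $a_{n+1}\leq(1+\alpha_n)a_n-\beta_n+\gamma_n$ with $\alpha_n=2\lambda_n^2$, $\beta_n=2\lambda_n\underline{\alpha}\,a_n$ and $\gamma_n=\lambda_n^2\EE[V_n]$, all nonnegative. Here $\prod_i(1+2\lambda_i^2)\leq\exp(2\sum_i\lambda_i^2)<e^{2\Lambda}$, the previous step gives $\sum_i\gamma_i<C$, and $a_0<b$. The quantitative part of Lemma \ref{qihou} with $K=b$, $L=e^{2\Lambda}$, $M=C$ then yields $\sum_{i=0}^\infty 2\lambda_i\underline{\alpha}\,a_i<e^{2\Lambda}(b+C)$, that is $\sum_{i=0}^\infty\lambda_i a_i<e^{2\Lambda}(b+C)/2\underline{\alpha}=D$.

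Finally I would apply Lemma \ref{sumconv} with $u_n=\lambda_n$, $v_n=a_n$ and the bound $D$: from $\sum_n\lambda_n a_n<D$ together with the divergence modulus $\theta$ satisfying $\sum_{n=k}^{\theta(k,b)}\lambda_n\geq b$ for all $b>0$, $k\in\mathbb{N}$, the lemma delivers both $\liminf_n a_n=0$ and the stated $\liminf$-rate with witness in $[N;\theta(N,D/\varepsilon)]$. The conceptual heart of the argument --- and the only genuinely delicate point --- is exactly the interplay flagged before Lemma \ref{ineq1}: Lemma \ref{ineq2} supplies the strong-monotonicity term $-2\lambda_n\underline{\alpha}a_n$ that becomes the summable $\beta_n$ feeding Qihou, but its own error $\gamma_n=\lambda_n^2\EE[V_n]$ is not manifestly summable; it is Lemma \ref{ineq1} (with the particular choice $\beta=\tfrac14$) that controls $\sum_n\lambda_n^2\EE[\norm{A_{\lambda_n}(\xi_{n+1},x_n)}_{x_{n+1}}^2]$ and thereby makes $\sum_i\gamma_i<C$ available. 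Matching the constants $C$ and $D$ exactly to the statement is then routine bookkeeping.
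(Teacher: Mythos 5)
Your proposal is correct and follows essentially the same route as the paper: Lemma \ref{ineq1} with $\beta=\tfrac14$ to control $\sum_n\lambda_n^2\EE[\norm{A_{\lambda_n}(\xi_{n+1},x_n)}_{x_{n+1}}^2]$, then Lemma \ref{ineq2} fed into the quantitative Qihou lemma to get $\sum_n\lambda_n\EE[d^2(x_n,x^*)]<D$, and finally Lemma \ref{sumconv}. The only (immaterial) difference is that you telescope the first inequality directly where the paper invokes Lemma \ref{qihou} a second time; your bookkeeping of the factor $2$ in $V_n$ is in fact the more careful one, and it lands on the same constants $C$ and $D$.
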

\begin{proof}
By Lemma \ref{ineq1} with $\beta=\tfrac{1}{4}$, we have
\[
\EE[d^2(x_{n+1},x^*)]\leq \EE[d^2(x_n,x^*)]-\frac{\lambda_n^2}{2}\EE[\norm{A_{\lambda_n}(\xi_{n+1},x_n)}_{x_{n+1}}^2]+\lambda_n^22c.
\]
Applying Lemma \ref{qihou} yields $\sum_{n=0}^\infty\frac{\lambda_n^2}{2}\EE[\norm{A_{\lambda_n}(\xi_{n+1},x_n)}_{x_{n+1}}^2]<2\left(b+\Lambda 2c\right)$ and so
\[
\sum_{n=0}^\infty\lambda_n^2\left(2\EE[\norm{A_{\lambda_n}(\xi_{n+1},x_n)}_{x_{n+1}}^2]+c\right)<8\left(b+\Lambda 2c\right)+\Lambda c=:C.
\]
By Lemma \ref{ineq2}, we have
\begin{align*}
&\EE[d^2(x_{n+1},x^*)]\\
&\;\;\;\leq(1+2\lambda_n^2)\EE[d^2(x_n,x^*)]-2\lambda_n\underline{\alpha}\EE[d^2(x_n,x^*)]+\lambda_n^2\left(2\EE[\norm{A_{\lambda_n}(\xi_{n+1},x_n)}_{x_{n+1}}^2]+c\right)
\end{align*}
and so Lemma \ref{qihou} implies that $\sum_{n=0}^\infty\lambda_n\EE[d^2(x_n,x^*)]<\frac{e^{2\Lambda}(b+C)}{2\underline{\alpha}}=:D$. Finally, Lemma \ref{sumconv} implies $\liminf_{n\to\infty}\EE[d^2(x_n,x^*)]=0$ together with the respective rate as claimed. 
\end{proof}

We can now already present our main theorem:

\begin{theorem}\label{thm:main}
Let $(E,\mathcal{E},\mu)$ and $(\Omega,\mathcal{F},\PP)$ be probability spaces. Let $X$ be a separable Hilbert-Hadamard space. Let $A:E\times X\to 2^{TX}$ with $A(s,x)\subseteq T_xX$ be a random monotone vector field and assume that $A(s,\cdot)$ satisfies the surjectivity condition for any $s\in E$. Let $(x_n)$ be the iteration given by \eqref{PPA}. Assume \eqref{A0} -- \eqref{A2}. Then it holds that 
\[
\EE[d^2(x_n,x^*)]\to 0\text{ and }d^2(x_n,x^*)\to 0\text{ a.s.}
\]
Moreover, the following rates of convergence apply: Let $\chi:(0,\infty)\to\mathbb{N}$ and $\theta:\mathbb{N}\times (0,\infty)\to\mathbb{N}$ be such that 
\[
\forall \varepsilon>0\left(\sum_{n=\chi(\varepsilon)}^\infty \lambda_n^2<\varepsilon\right) \text{ and }\forall b>0\ \forall k\in\mathbb{N}\left(\sum_{n=k}^{\theta(k,b)}\lambda_n\geq b\right).
\]
Let $\Lambda > \sum_{n=0}^\infty \lambda_n^2$. Also, let $\phi^*\in S^2_A(x^*)$ such that $\int\phi^*\,d\mu=0_{x^*}$ and $c>0$ with $c>\int\norm{\phi^*}^2_{x^*}\,d\mu$. Lastly, let $b>0$ be such that $b>d^2(x_0,x^*)$. Then
\[
\forall \varepsilon>0\ \forall n\geq \rho(\varepsilon)\left(\EE[d^2(x_n,x^*)]<\varepsilon\right)
\]
with rate $\rho(\varepsilon):=\theta(\chi(\varepsilon/2c),2D/\varepsilon)$ and 
\[
\forall \lambda,\varepsilon>0\left(\PP\left(\exists n\geq\rho'(\lambda,\varepsilon)\left(d^2(x_n,x^*)\geq\varepsilon\right)\right)<\lambda\right)
\]
with rate $\rho'(\lambda,\varepsilon):=\rho(\lambda\varepsilon)$. Here: $C:=8\left(b+\Lambda2c\right)+\Lambda c$ and $D:=e^{2\Lambda}(b+C)/2\underline{\alpha}$.
\end{theorem}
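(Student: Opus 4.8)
The plan is to bootstrap the $\liminf$-rate already secured in Lemma \ref{liminf} into a genuine (full) rate of convergence in expectation, and then to transfer this to the almost sure statement by recognizing a nonnegative supermartingale hidden in Lemma \ref{ineq1} and invoking a maximal inequality. Throughout I would take $\beta=\tfrac12$ in Lemma \ref{ineq1}, so that the negative Yosida term vanishes and, after taking the full expectation $\EE$, one is left with the quasi-monotonicity estimate $\EE[d^2(x_{n+1},x^*)]\leq\EE[d^2(x_n,x^*)]+\lambda_n^2 c$, using $\int\norm{\phi^*}_{x^*}^2\,d\mu<c$.

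For the rate in expectation, first I would invoke Lemma \ref{liminf} with starting index $\chi(\varepsilon/2c)$ and target accuracy $\varepsilon/2$ to obtain a witness $n^*\in[\chi(\varepsilon/2c);\theta(\chi(\varepsilon/2c),2D/\varepsilon)]=[\chi(\varepsilon/2c);\rho(\varepsilon)]$ with $\EE[d^2(x_{n^*},x^*)]<\varepsilon/2$. Summing the quasi-monotonicity estimate from $n^*$ up to any $m\geq n^*$ gives $\EE[d^2(x_m,x^*)]\leq\EE[d^2(x_{n^*},x^*)]+c\sum_{k=n^*}^\infty\lambda_k^2$; since $n^*\geq\chi(\varepsilon/2c)$ the tail sum is $<\varepsilon/2c$, so the second term is $<\varepsilon/2$ and the whole expression is $<\varepsilon$. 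As this holds for every $m\geq n^*$, and in particular for every $m\geq\rho(\varepsilon)\geq n^*$, the function $\rho(\varepsilon)=\theta(\chi(\varepsilon/2c),2D/\varepsilon)$ is the desired rate.

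For the almost sure statement, I would set $Y_n:=d^2(x_n,x^*)+c\sum_{k=n}^\infty\lambda_k^2$, a well-defined nonnegative integrable random variable by Corollary \ref{L2Bound} and the summability of $(\lambda_n^2)$. The conditional form of Lemma \ref{ineq1} (with $\beta=\tfrac12$, keeping the inequality almost sure) yields $\EE_n[Y_{n+1}]\leq Y_n$, so $(Y_n)$ is a nonnegative supermartingale with respect to $(\mathcal{F}_n)$. By the maximal inequality for nonnegative supermartingales (Ville's/Doob's inequality applied to the time-shifted process) one gets $\PP(\sup_{n\geq N}Y_n\geq\varepsilon)\leq\EE[Y_N]/\varepsilon$ for every $N$, and since $d^2(x_n,x^*)\leq Y_n$ pointwise this simultaneously bounds $\PP(\exists n\geq N:d^2(x_n,x^*)\geq\varepsilon)$. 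Taking $N=\rho(\lambda\varepsilon)$, the expectation-rate construction applied at accuracy $\lambda\varepsilon$ produces a witness $n^*\in[\chi(\lambda\varepsilon/2c);\rho(\lambda\varepsilon)]$ with $\EE[d^2(x_{n^*},x^*)]<\lambda\varepsilon/2$ and $c\sum_{k\geq n^*}\lambda_k^2<\lambda\varepsilon/2$, hence $\EE[Y_{n^*}]<\lambda\varepsilon$; the supermartingale property then gives $\EE[Y_{\rho(\lambda\varepsilon)}]\leq\EE[Y_{n^*}]<\lambda\varepsilon$, so the maximal inequality yields probability $<\lambda$, which is exactly the claim with $\rho'(\lambda,\varepsilon)=\rho(\lambda\varepsilon)$.

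Finally, the two qualitative assertions are immediate consequences: $\EE[d^2(x_n,x^*)]\to 0$ is the defining property of the rate $\rho$, while $d^2(x_n,x^*)\to 0$ almost surely follows from $\rho'$ since for each fixed $\varepsilon$ the event that $d^2(x_n,x^*)\geq\varepsilon$ for infinitely many $n$ is contained in $\{\exists n\geq\rho'(\lambda,\varepsilon):d^2(x_n,x^*)\geq\varepsilon\}$, hence has probability below $\lambda$ for every $\lambda>0$ and thus probability $0$; intersecting over $\varepsilon=1/k$ gives the result. The main obstacle I anticipate is the constant bookkeeping in the almost sure step: a naive direct bound on $\EE[Y_{\rho(\lambda\varepsilon)}]$ double-counts the tail contribution and only delivers $\tfrac{3}{2}\lambda$, so the crucial point is to route the estimate through the earlier witness $n^*$ and exploit the monotonicity $\EE[Y_N]\leq\EE[Y_{n^*}]$ of the supermartingale to recover the sharp factor, together with correctly matching the hypotheses of the maximal inequality to the metric (rather than Hilbertian) quasi-Fej\'er inequalities.
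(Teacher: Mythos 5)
Your proposal is correct and follows essentially the same route as the paper: invoke the $\liminf$-rate of Lemma \ref{liminf} at accuracy $\varepsilon/2$ with starting index $\chi(\varepsilon/2c)$, upgrade it to a full rate via the supermartingale $d^2(x_n,x^*)+c\sum_{m\geq n}\lambda_m^2$ obtained from Lemma \ref{ineq1} with $\beta=\tfrac12$, and derive the almost sure rate by applying Ville's maximal inequality at the $\liminf$-witness. Your observation about routing the bound on $\EE[Y_{\rho(\lambda\varepsilon)}]$ through the witness $n^*$ to avoid the spurious factor $\tfrac32$ matches exactly what the paper does by applying Ville's inequality at the witness index rather than at $\rho(\lambda\varepsilon)$.
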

\begin{proof}
It obviously suffices to establish the quantitative results. For any $n\in\mathbb{N}$, define $X_{n}:=d^2(x_n,x^*)+c\sum^\infty_{k=n}\lambda_k^2$. As $(x_n)$ is adapted to $(\mathcal{F}_n)$, also $(X_{n})$ is adapted to $(\mathcal{F}_n)$. As we have
\[
\EE_n[d^2(x_{n+1},x^*)]\leq d^2(x_n,x^*)+\lambda_n^2c
\]
by Lemma \ref{ineq1}, the stochastic process $(X_{n})$ is a nonnegative supermartingale. Indeed, note that 
\begin{align*}
\EE_n[X_{n+1}]&=\EE_n\left[d^2(x_{n+1},x^*)\right]+c\sum^\infty_{k={n+1}}\lambda_k^2\leq d^2(x_n,x^*) + c\sum^\infty_{k=n}\lambda_k^2=X_{n}.
\end{align*}
Now, let $\varepsilon>0$ be arbitrary. Using Lemma \ref{liminf}, we choose an
\[
n_0\in \left[\chi(\varepsilon/2c);\theta(\chi(\varepsilon/2c),2D/\varepsilon)\right]
\]
such that $\EE[d^2(x_{n_0},x^*)]<\varepsilon/2$. Let $n\geq n_0$ be arbitrary. Then
\[
\EE[d^2(x_{n},x^*)]\leq \EE[X_{n}]\leq \EE[X_{n_0}]= \EE[d^2(x_{n_0},x^*)]+c\sum^\infty_{k=n_0}\lambda_k^2< \varepsilon
\]
using that $(X_{n})$ is a supermartingale and the properties of $\chi$. As $n$ was arbitrary, this yields $\EE[d^2(x_{n},x^*)]\to 0$ and that $\rho$ is a rate of convergence for that limit. For $d^2(x_{n},x^*)\to 0$ a.s., note that
\begin{align*}
\PP(\exists n\geq n_0(d^2(x_{n},x^*)\geq a))\leq \PP(\exists n\geq n_0(X_n\geq a))\leq \frac{\EE[X_{n_0}]}{a}
\end{align*}
where the second inequality follows from Ville's inequality \cite{Ville1939} (see also \cite{Metivier1982}). This immediately implies that $d^2(x_{n},x^*)\to 0$ a.s.\ with rate $\rho'$.
\end{proof}

While the proof is presented here in a self-contained style, it follows the outline and is effectively an instance of the proof of \cite[Theorem 2.8]{NeriPischkePowell2025}.

To our knowledge, in the generality presented here, the quantitative aspects of the above Theorem \ref{thm:main} are already novel in the Hilbert space context. The qualitative convergence result generalizes \cite[Theorem 4]{Bianchi2016} and in that way seems to be novel in all classes of Hilbert-Hadamard spaces transcending Hilbert spaces, in particular including Hadamard manifolds.

\begin{remark}
If $\rho$ is invertible and decreasing, Theorem \ref{thm:main} immediately implies the nonasymptotic guarantee $\EE[d^2(x_n,x^*)]\leq\rho^{-1}(n)$ for all $n\in\mathbb{N}$. We can then also derive a similar estimate for $\PP(\exists m\geq n(d^2(x_{m},x^*)\geq \varepsilon))$. However, the complexity of the rates that arise from Theorem \ref{thm:main} is without further assumptions rather dire, namely exponential:\footnote{As such, the rates are similar for deterministic variants of the proximal point algorithm in metric settings without further assumptions than strong monotonicity, as e.g.\ obtained in \cite{LeusteanSipos2018} for the special case of strongly (even uniformly) convex functions.} For the canonical choice of $\lambda_n=\frac{1}{n+1}$, a quick calculation shows that we get
\[
\EE[d^2(x_n,x^*)]\leq\frac{4\max\{C,D\}}{\ln(n+2)}\text{ for all }n\in\mathbb{N}
\]
in that case, with a similar bound on $\PP(\exists m\geq n(d^2(x_{m},x^*)\geq \varepsilon))$. This is because we make no other assumptions on $A$ besides the strong monotonicity assumption and the minor underlying measurability assumptions. A brief discussion on fast rates under additional assumptions is given below.
\end{remark}

As a particular corollary, we get the following result on minimizing strongly convex integrands as discussed in Example \ref{randomSubDiffEx}:

\begin{corollary}\label{cor:convexFct}
Let $(E,\mathcal{E},\mu)$ and $(\Omega,\mathcal{F},\PP)$ be probability spaces. Let $X$ be a separable Hilbert-Hadamard space. Let $f:E\times X\to (-\infty,+\infty]$ be a normal convex integrand such that $f(s,\cdot)$ is strongly convex with modulus $\alpha(s)>0$ and $\alpha$ is integrable with $\int\alpha\,d\mu>0$. Assume that $F(x):=\int f(s,x)\,d\mu(s)$ is proper. Let $(x_n)$ be the iteration given by $x_{n+1}:=\mathrm{prox}^f_{\lambda_n}(\xi_{n+1},x_n)$, and assume \eqref{A0} and \eqref{A2}.

Then $\EE[d^2(x_n,x^*)]\to 0$ and $d^2(x_n,x^*)\to 0$ a.s., and $x^*$ is the unique minimizer of $F$. Moreover, rates of convergence can be computed for these limits in similarity to Theorem \ref{thm:main}.
\end{corollary}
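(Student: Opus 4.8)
The plan is to recognise this corollary as the specialisation of Theorem \ref{thm:main} to the random monotone vector field $A := \partial f$. Accordingly, the entire proof reduces to checking that the hypotheses of that theorem are satisfied in this concrete setting, whereupon the conclusion together with the rates follows at once. Essentially all the facts I need have already been assembled in Example \ref{randomSubDiffEx} and in the surrounding examples and propositions, so the proof is a matter of bookkeeping rather than of genuine analysis.

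First I would confirm that $A = \partial f$ is a random monotone vector field satisfying the surjectivity condition. The monotonicity of each $\partial f(s,\cdot)$ is recorded in Example \ref{subDiffEx}, while the measurability of the resolvents $J_\lambda(\cdot,x)$ follows since, by Example \ref{resEx}, these resolvents coincide with the proximal maps $\mathrm{prox}^f_\lambda(s,\cdot)$, whose measurability is precisely the content noted in Example \ref{randomSubDiffEx}. The same Example \ref{resEx} shows that each $\mathrm{prox}^f_\lambda(s,\cdot)$ is total, so $A$ satisfies the surjectivity condition. As a consequence $J_{\lambda_n}(s,\cdot) = \mathrm{prox}^f_{\lambda_n}(s,\cdot)$, and hence the iteration $x_{n+1} = \mathrm{prox}^f_{\lambda_n}(\xi_{n+1},x_n)$ prescribed in the corollary is literally an instance of \eqref{PPA}.

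Next I would verify assumption \eqref{A1}. Since each $f(s,\cdot)$ is strongly convex with constant $\alpha(s) > 0$, Proposition \ref{strConToStrMon} yields that $\partial f(s,\cdot)$ is strongly monotone with the same constant $\alpha(s)$; combined with the integrability of $\alpha$ and the standing hypothesis $\int \alpha\, d\mu > 0$, this is exactly \eqref{A1}, so that $\underline{A} = \underline{\partial f}$ is strongly monotone with modulus $\underline{\alpha} = \int \alpha\, d\mu > 0$. The remaining assumptions \eqref{A0}, \eqref{A2} and \eqref{A3} are imposed directly in the hypotheses of the corollary, so all of \eqref{A0}--\eqref{A3} are in force. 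Invoking Theorem \ref{thm:main} then gives $\EE[d^2(x_n,x^*)] \to 0$ together with $d^2(x_n,x^*) \to 0$ almost surely, equipped with the rates $\rho$ and $\rho'$ obtained there (once the auxiliary data $\chi$, $\theta$, $\Lambda$, $c$, $b$ and a witnessing $\phi^* \in S^2_A(x^*)$ are fixed as in the theorem).

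Finally I would record the optimisation interpretation of the limit, which I regard as the only mildly delicate point. Under the assumed exchange $\int \partial f(s,x)\, d\mu(s) = \partial \int f(s,x)\, d\mu(s)$, Example \ref{randomSubDiffEx} identifies $\mathrm{zer}\,\underline{\partial f} = \mathrm{zer}\,\partial F = \mathrm{argmin}\, F$, so that the unique zero $x^*$ of $\underline{A}$ supplied by \eqref{A2} is precisely the unique minimiser of the mean functional $F$. I do not expect any real analytic obstacle: the work is entirely in matching each hypothesis of Theorem \ref{thm:main} against the collected examples, with the only point requiring care being this identification of $x^*$ as the minimiser of $F$ via the subdifferential-exchange assumption.
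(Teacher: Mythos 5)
Your proposal is correct and matches the paper's own treatment: the corollary is obtained by specialising Theorem \ref{thm:main} to $A=\partial f$, with \eqref{A1} supplied by Proposition \ref{strConToStrMon} and the remaining hypotheses checked against Examples \ref{subDiffEx}, \ref{resEx} and \ref{randomSubDiffEx}, exactly as you do. The identification of $x^*$ with the minimiser of $F$ via the subdifferential-exchange assumption is likewise the content of Example \ref{randomSubDiffEx}, so no further argument is needed.
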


Note that assumption \eqref{A1} is immediately satisfied in the context of Corollary \ref{cor:convexFct} by virtue of Proposition \ref{strConToStrMon}.

Similarly, we get a corollary for saddle point problems but we do not spell this out in detail here.

As a last result, we briefly discuss an additional assumption on $A$ that allows us to derive fast rates of convergence. Concretely, assume the following uniform boundedness property for the second moments of the Yosida approximates along the iteration: 
\begin{equation}
\text{There is a $\sigma>0$ such that }\EE[\norm{A_{\lambda_n}(\xi_{n+1},x_n)}^2_{x_{n+1}}]\leq\sigma\text{ for all }n\in\mathbb{N}.\tag{A4}\label{A4}
\end{equation}
This is a special case of a general uniform boundedness assumption of second moments of \emph{arbitrary} selections from the random field $A$. In that way, assumption \eqref{A4} is akin to the uniform boundedness assumptions for second moments of subdifferential selections as considered over linear spaces, e.g., in \cite{AsiDuchi2019,EisenmannStillfjordWilliamson2022}. In that context, we can derive the following fast nonasymptotic guarantees:

\begin{theorem}\label{thm:mainFast}
In the context of Theorem \ref{thm:main}, assume \eqref{A4}. Then for $\lambda_{n}:=1/\underline{\alpha}(n+2)$, it holds that
\[
\EE[d^2(x_n,x^*)]\leq \frac{u}{n+2}\text{ and }\PP\left(\exists m\geq n\left(d^2(x_m,x^*)\geq\varepsilon\right)\right)\leq \frac{1}{\varepsilon}\cdot \frac{2u}{n+2}
\]
for all $n\in\mathbb{N}$, where $u=\max\{(4\sigma+2c)/\underline{\alpha}^2,\ceil*{4/\underline{\alpha}^2}(b+c\Lambda)\}$  and $b\geq d^2(x_{0},x^*)$.
\end{theorem}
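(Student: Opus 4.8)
The plan is to specialize the two master inequalities (Lemmas \ref{ineq1} and \ref{ineq2}) to the prescribed step sizes $\lambda_n=1/\underline{\alpha}(n+2)$ and then feed the resulting recurrence into the quantitative Qihou lemma (Lemma \ref{qihou}) together with Ville's inequality, exactly as in the proof of Theorem \ref{thm:main}, but now exploiting the uniform second-moment bound \eqref{A4} to obtain a summand of order $1/(n+2)^2$ rather than merely a summable sequence of unknown rate. First I would take expectations in Lemma \ref{ineq2}, which gives
\[
\EE[d^2(x_{n+1},x^*)]\leq (1+2\lambda_n^2)\EE[d^2(x_n,x^*)]-2\lambda_n\underline{\alpha}\EE[d^2(x_n,x^*)]+\lambda_n^2\EE[V_n],
\]
and bound $\EE[V_n]=2\EE[\norm{A_{\lambda_n}(\xi_{n+1},x_n)}_{x_{n+1}}^2]+\int\norm{\phi^*}^2_{x^*}\,d\mu\leq 2\sigma+\int\norm{\phi^*}^2_{x^*}\,d\mu<2\sigma+c$ via \eqref{A4} and the choice of $c$. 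Writing $e_n:=\EE[d^2(x_n,x^*)]$ and substituting $\lambda_n=1/\underline{\alpha}(n+2)$, the strong-monotonicity term becomes $2\lambda_n\underline{\alpha}e_n=\frac{2}{n+2}e_n$, so the recurrence reads $e_{n+1}\leq(1-\frac{2}{n+2}+\frac{2}{\underline{\alpha}^2(n+2)^2})e_n+\frac{4\sigma+2c}{2\underline{\alpha}^2(n+2)^2}$, which is the standard contraction-with-decaying-noise shape.

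The heart of the argument is then an induction proving the nonasymptotic bound $e_n\leq u/(n+2)$ for the claimed $u=\max\{4\sigma+2c,\ceil*{4/\underline{\alpha}^2}(b+c\Lambda)\}$. The base case follows from $e_0\leq b\leq u/2$, and for the inductive step I would insert $e_n\leq u/(n+2)$ into the recurrence and verify the algebraic inequality
\[
\Big(1-\frac{2}{n+2}\Big)\frac{u}{n+2}+\frac{2u}{\underline{\alpha}^2(n+2)^3}+\frac{2\sigma+c}{\underline{\alpha}^2(n+2)^2}\leq \frac{u}{n+3},
\]
where the leading $(1-2/(n+2))u/(n+2)=u\cdot n/(n+2)^2$ already sits below $u/(n+3)$ by a margin of order $1/(n+2)^2$, and the two positive perturbation terms must be absorbed into that margin; this is where the two competing defining alternatives for $u$ are exactly what is needed, the first controlling the noise term and the second (through the $\ceil*{4/\underline{\alpha}^2}$ factor) controlling the $\underline{\alpha}^{-2}$ curvature-type corrections. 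This elementary but delicate margin comparison is the step I expect to be the main obstacle, since one must keep the constants honest across the $1/(n+2)^2$ versus $1/(n+3)$ mismatch without discarding the sharp $O(1/n)$ rate.

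For the almost-sure high-probability guarantee I would reuse the supermartingale device from Theorem \ref{thm:main}. Setting $X_n:=d^2(x_n,x^*)+c\sum_{m=n}^\infty\lambda_m^2$, Lemma \ref{ineq1} (with $\beta=\tfrac12$, so the Yosida term has a nonpositive coefficient and can be dropped) again makes $(X_n)$ a nonnegative supermartingale, and Ville's inequality gives $\PP(\exists m\geq n(d^2(x_m,x^*)\geq\varepsilon))\leq \EE[X_n]/\varepsilon$. It then remains to bound $\EE[X_n]=e_n+c\sum_{m=n}^\infty\lambda_m^2$: the first summand is $\leq u/(n+2)$ by the induction just completed, and I would estimate the tail $\sum_{m=n}^\infty 1/\underline{\alpha}^2(m+2)^2$ by an integral comparison to produce a term of order $1/(n+2)$, assembling into the stated bound $\tfrac{1}{\varepsilon}\cdot\tfrac{e^{2\Lambda}(u+4\sigma+2c)}{n+2}$; tracking how the factor $e^{2\Lambda}$ and the additive $4\sigma+2c$ emerge from combining these pieces with the earlier $\prod(1+2\lambda_n^2)<e^{2\Lambda}$ estimate is the final bookkeeping step.
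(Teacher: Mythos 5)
Your overall strategy is the paper's: specialize Lemma \ref{ineq2} under \eqref{A4} with $\lambda_n=1/\underline{\alpha}(n+2)$, run an induction to get $\EE[d^2(x_n,x^*)]\leq u/(n+2)$, and then reuse the supermartingale $X_n=d^2(x_n,x^*)+c\sum_{m\geq n}\lambda_m^2$ with Ville's inequality for the probability bound. The recurrence you derive is correct, and the second half (Ville's inequality plus the tail estimate) matches the paper.

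The genuine gap is in your inductive step, specifically in how you propose to handle the term $\frac{2u}{\underline{\alpha}^2(n+2)^3}$ coming from $2\lambda_n^2 e_n$. You claim the $\ceil*{4/\underline{\alpha}^2}$ branch of the maximum defining $u$ absorbs this ``$\underline{\alpha}^{-2}$ curvature-type correction'', but that cannot work: this term is \emph{proportional to $u$}, exactly like the available margin
\[
\frac{u}{n+3}-\Big(1-\frac{2}{n+2}\Big)\frac{u}{n+2}=u\,\frac{n+4}{(n+3)(n+2)^2},
\]
so enlarging $u$ rescales both sides equally and buys nothing. Comparing the two, absorption requires roughly $\frac{2}{\underline{\alpha}^2(n+2)}\leq 1$, which fails for all $n$ with $n+2<2/\underline{\alpha}^2$ whenever $\underline{\alpha}<1$ (which is the generic case, since the paper normalizes $\alpha(s)\leq 1$). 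So your induction, started at $n=0$ from $e_0\leq b\leq u/2$, breaks down on the initial segment. The paper's proof avoids this by splitting: for $n\leq n_0:=\ceil*{4/\underline{\alpha}^2}-2$ the bound $\EE[d^2(x_n,x^*)]<b+c\Lambda\leq u/(n+2)$ holds \emph{trivially} from Corollary \ref{L2Bound} together with the $\ceil*{4/\underline{\alpha}^2}(b+c\Lambda)$ branch of $u$ (that is the actual purpose of that branch), and only for $n\geq n_0$ does one check $1+2\lambda_n^2-2\lambda_n\underline{\alpha}\leq 1-1.5/(n+2)$ and run the induction, so that the problematic $2\lambda_n^2 e_n$ term is folded into a genuine contraction factor before the induction ever starts. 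You should restructure your argument accordingly; as a secondary point, you should also check that the noise term $\frac{2\sigma+c}{\underline{\alpha}^2(n+2)^2}$ fits under the margin with the first branch $4\sigma+2c$ of $u$ for $n\geq n_0$, since the $\underline{\alpha}^{-2}$ there does not cancel automatically either and needs the restriction to $n\geq n_0$ (or the cited Lemma 3.5 of \cite{NeriPischkePowell2025}) to be handled honestly.
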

\begin{proof}
Note that $\EE[d^2(x_n,x^*)]< b+c\Lambda$ by Corollary \ref{L2Bound}. Hence, the claim is clear for $n\leq n_0:=\ceil*{4/\underline{\alpha}^2}-2$. Using \eqref{A4}, it follows from Lemma \ref{ineq2} that
\[
\EE[d^2(x_{n+1},x^*)]\leq(1+2\lambda_n^2-2\lambda_n\underline{\alpha})\EE[d^2(x_n,x^*)]+\lambda_n^2(2\sigma+c).
\]
It is straightforward, albeit a bit tedious, to verify that $1+2\lambda_n^2-2\lambda_n\underline{\alpha}\leq 1-1.5/(n+2)
$ for $n\geq n_0$, so that we obtain
\[
\EE[d^2(x_{n+1},x^*)]\leq\left(1-\frac{1.5}{n+2}\right)\EE[d^2(x_n,x^*)]+\lambda_n^2(2\sigma+c)
\]
for all such $n$. We then get $\EE[d^2(x_{n},x^*)]\leq u/(n+2)$ for the $u$ above by induction on $n$ (see also \cite[Lemma 3.5]{NeriPischkePowell2025}). Akin to the proof of Theorem \ref{thm:main}, we can then also derive
\begin{align*}
\PP(\exists m\geq n(d^2(x_{m},x^*)\geq \varepsilon))\leq \frac{1}{\varepsilon}\cdot \frac{2u}{n+2}
\end{align*}
by first moving to the supermartingale $X_n=d^2(x_n,x^*)+c\sum_{k=n}^\infty\lambda_k^2$, showing $\EE[X_n]\leq 2u/(n+2)$ and then applying Ville's inequality. We omit the details here.
\end{proof}

As with the proof of Theorem \ref{thm:main}, while we have presented the proof in a style tailored to \eqref{PPA}, it follows the outline and is effectively an instance of the proof of \cite[Theorem 3.6]{NeriPischkePowell2025}.\\

\noindent {\bf Acknowledgments:} I want to thank Ulrich Kohlenbach, Morenikeji Neri and in particular Thomas Powell for many helpful comments on a previous draft of this paper. I also want to thank the anonymous referees for the very careful reading of the manuscript and the many helpful suggestions which greatly improved the paper at various places.

\bibliographystyle{plain}
\bibliography{ref}

\end{document}